\title{Diagonal Couplings of\\Quantum Markov Chains}
\author{Burkhard K\"ummerer and Kay Schwieger}
\date{}
	\newlist{equivalence}{enumerate}{1}
	\setlist[equivalence]{label=(\alph*)}
\let\eqref\ref
\theoremstyle{plain}
	\newtheorem{thm}{Theorem}[section]
	\newtheorem{prop}[thm]{Proposition}
	\newtheorem{lemma}[thm]{Lemma}
\theoremstyle{definition}
	\newtheorem{defn}[thm]{Definition}
\theoremstyle{remark}
	\newtheorem{rmk}[thm]{Remark}
	\newtheorem{expl}[thm]{Example}
\newcommand*{\alg}{\mathcal}		% style for algebras
\newcommand*{\hilb}{\mathfrak}		% style for Hilbert spaces
\newcommand*{\N}{\mathbb N}		% natural numbers
\newcommand*{\Z}{\mathbb Z}		% integers
\newcommand*{\R}{\mathbb R}		% real numbers
\newcommand*{\C}{\mathbb C}		% complex numbers
\newcommand*{\HS}{\mathrm{HS}}		% Hilbert-Schmidt operators
\newcommand*{\one}{\mathbbm 1}		% unit in an algebra
\newcommand*{\tensor}{\otimes}		% tensor product
\newcommand*{\bigtensor}{\bigotimes}	
\newcommand*{\atensor}{\odot}		% algebraic tensor product
\newcommand*{\B}{\alg B}		% bounded operators
\renewcommand*{\H}{\hilb H}		% standard Hilbert spaces
\newcommand*{\BH}{\B(\H)}		% shortcut for \B(\H)
\newcommand*{\M}{\alg M}		% standard W*-algebra
\DeclarePairedDelimiter{\abs}{\lvert}{\rvert}	% absolute value
\DeclarePairedDelimiter{\scal}{\langle}{\rangle}	% scalar product
\DeclarePairedDelimiter{\norm}{\lVert}{\rVert} 	% norm
\DeclareMathOperator{\lin}{lin}		% linear span
\DeclareMathOperator{\supp}{supp}	% support
\DeclareMathOperator{\Tr}{Tr}		% unnormalized trace
\DeclareMathOperator{\Id}{Id}		% identity operator
\newcommand*{\cf}{\mbox{cf.}\xspace}
\newcommand*{\eg}{\mbox{e.\,g.}\xspace}
\newcommand*{\etal}{\mbox{et al.}\xspace}
\newcommand*{\ie}{\mbox{i.\,e.}\xspace}
\newcommand*{\rhs}{\mbox{r.\,h.\,s.}\xspace}
\newcommand*{\ndash}{\nobreakdash-}
\newcommand*{\Star}{$^*$\ndash}
\begin{document}
\maketitle
\begin{abstract}
	In this article we extend the coupling method from classical probability theory to quantum Markov chains on atomic von Neumann algebras. In particular, we establish a coupling inequality, which allow us to estimate convergence rates by analyzing couplings. For a given tensor dilation we construct a self-coupling of a Markov operator. It turns out that on a dense subset the coupling is a dual version of the extended dual transition operator previously studied by Gohm \etal We deduce that this coupling is successful if and only if the dilation is asymptotically complete.
\end{abstract}

\sloppy

\section{Introduction}
In classical probability theory the coupling method has evolved to one of the standard techniques to analyze ergodic properties of stochastic processes. It is particularly useful for two kinds of problems: First, ergodic properties, \eg ergodicity or mixing, can be characterized by the possibility to couple with a certain type of systems (see \eg \cite{Fuerstenberg, Glasner}).%
\footnote{Note that the literature on these problems prefers the term "joining" instead of coupling.}
Second, the mixing time of a process can be estimated by constructing and analyzing particular couplings (see \eg \cite{Pitman74, Lindvall}). 
For the first type of problems some generalization to quantum theory has been successfully investigated in recent years by F.\,Fidaleo \cite{Fidaleo09} and by a series of papers of R.\,Duvenhage, \eg \cite{Duvenhage08, Duvenhage10a, Duvenhage12}.
To our knowledge the second type of problems, however, has not been addressed in quantum theory yet. 

This paper gives a first approach to the coupling method for quantum Markov chains. We establish an upper bound for the speed of mixing of the process via couplings. \mbox{Furthermore,} we provide a construction of couplings to which this estimate can successfully be applied. We show that the convergence of the coupling is intimately related to the scattering theory developed by B.\,K\"ummerer and H.\,Maassen~\cite{Kuemmerer-Maassen00}. For classical systems this was already observed by R.\,Gohm, B.\,K\"ummerer, and T.\,Lang~\cite{Gohm-etal}. We prove a duality that extends their observation to non-commutative systems.

For non-classical systems we observe that the constructed couplings converge to highly entangled states. Although we do not explicitly investigate entanglement properties, we hope that our investigations also sheds some light onto the subject of entangled Markov operators or entangled quantum channels.

The paper itself is organized as follows: After some preliminaries, Section~\ref{sec:couplings} introduces couplings in general and provides some elementary properties of the diagonal coupling. 
Our von Neumann algebraic approach raises the question under which conditions the (purely algebraic) diagonal state admits a normal extension. This is addressed in Section~\ref{sec:extension}, where we find that this requirement restricts us to purely atomic von Neumann algebras. 
In Section~\ref{sec:q_c_inequ} we establish a coupling inequality by means of so called \emph{diagonal projections}. The tightest bounds in this estimate are given by maximal diagonal projections. We characterize these projections for the algebra $\BH$, the bounded operators on a Hilbert space. 

From Section~\ref{sec:diagonal_c} on we turn our attention to Markovian dynamics. For a tensor dilation of a Markov operator we introduce a canonically associated coupling, the so called \emph{diagonal coupling}. We prove in Section~\ref{sec:convergence_diagonal_c} that this coupling satisfies a duality to the extended dual transition operator studied by R.\,Gohm \cite{Gohm04a}. This allows us to give a necessary and sufficient condition for the convergence of the diagonal coupling and hence a criteria for the original Markov chain to be mixing.
We finish by applying our method to some illustrating examples and discussing the result in Section~\ref{sec:applications}.

\section*{Notations and Prelilminaries}
Throughout the paper, complex Hilbert spaces are usually denoted by $\H$ or $\hilb K$. Their scalar product is assumed to be linear in the first component. For their Hilbert space tensor product we write $\H \tensor \hilb K$. The set of bounded operators on a Hilbert space~$\H$ is denoted by~$\BH$. 
Von Neumann algebras are usually denoted by $\M$ or $\alg N$. For two von Neumann algebras $\M$ and $\alg N$ we write $\M \tensor \alg N$ for their von Neumann tensor product and $\M \atensor \alg N$ for their algebraic tensor product.
A unital completely positive normal map $S:\M \to \M$ is briefly called a \emph{Markov operator}. For two such maps $S:\M_1 \to \M_2$ and $T:\alg N_1 \to \alg N_2$ their tensor product is denoted by \mbox{$S \tensor T:\M_1 \tensor \alg N_1 \to \M_2 \tensor \alg N_2$}.

Let $\M$ be a von Neumann algebra. Most of the time we will assume that $\M$ acts in standard representation on its standard Hilbert space $L^2(\M)$. We write \mbox{$J:L^2(\M) \to L^2(\M)$} for the modular conjugation. By Tomita-Takesaki-Theory the commutant of $\M$ in this representation is given by $\M' = J \M J$. Thus, the von Neumann algebra~$\M'$ is anti-linearly isomorphic to $\M$ via the map $x \mapsto JxJ$, or equivalently, $\M'$ is anti-multiplicatively isomorphic to $\M$ via $x \mapsto Jx^*J$. For a normal state $\varphi$ on $\M$ we write $\xi_\varphi$ for its standard GNS-vector. By $\sigma_t^\varphi$, $t \in \R$, we denote the modular automorphism group associated with $\varphi$. For further details on Tomita-Takesaki-Theory we recommend
%\fxnote*{Die B\"ucher \cite{Pedersen} und \cite{Stratila} werden nicht mehr gedruckt. Ist das ein Problem?}%
{\cite{Pedersen} or \cite{Stratila}}.

\pagebreak[3]
For each completely positive map $T:\M \to \alg N$ between von Neumann algebras $\M$ and~$\alg N$ we define its \emph{opposite map} by
\begin{equation*}
	T':\M' \to \alg N', \quad T'(x') := J \, T (J\, x'\, J) \, J.
\end{equation*}
For a normal state $\varphi$ on $\M$ its opposite $\varphi'$ is given by the same GNS-vector, \ie, for all $x \in \M'$ we have
\begin{equation*}
	\varphi'(x') = \scal{x' \xi_\varphi, \xi_\varphi}.
\end{equation*}

The standard representations of the algebra $\B(\H)$ for some Hilbert space $\H$ is given by the Hilbert space  $\HS(\H)$ of all Hilbert-Schmidt operators on $\H$, on which $\B(\H)$ acting via by left multiplication. The modular conjugation is given by $J \rho = \rho^*$, $\rho \in \HS(\H)$. 
We will frequently identify $\HS(\H)$ canonically with the tensor product $\H \tensor \bar\H$, where $\bar \H$ denotes the conjugated Hilbert space of $\H$. In this picture the standard representation is given by $\BH$ acting on the first tensor factor. The modular conjugation is the continuous linear extension of $J(\xi \tensor \eta) = \eta \tensor \xi$ for all $\xi \in \H$, $\eta \in \bar\H$. In this particular example the von Neumann tensor product $\BH \tensor \BH'$ is isomorphic to $\B\bigl(\HS(\H) \bigr) = \B(\H \tensor \bar\H)$.

\section{Coupling of Quantum States \& Diagonal States}
\label{sec:couplings}

In this section we introduce the basic notions of couplings and diagonal states. These notions can be defined in a C$^*$-algebraic or even purely algebraic context. However, we restrict ourselves to von Neumann algebras for reasons that will become important in the later sections: First, we use the Tomita-Takesaki-Theory for von Neumann algebras in some of our results. Second, the coupling inequality established in Section~\ref{sec:q_c_inequ} relies on diagonal projections, whose existence is not guaranteed for C$^*$-algebras. And third, the scattering theory for Markov processes discussed in Section~\ref{sec:convergence_diagonal_c} is developed for von Neumann algebras.

\begin{defn}
	Let $\varphi$ and $\psi$ be normal states on von Neumann algebras $\M$ and $\alg N$, respectively. \mbox{A~\emph{coupling}} of $\varphi$ and $\psi$ is a normal state $\hat \varphi$ on $\M \tensor \alg N$ satisfying
	\begin{align}
		\label{eq:coupl_states}
		\hat \varphi(x \tensor \one) &= \varphi(x) \;,
		&
		\hat \varphi(\one \tensor y) &= \psi(y) 
	\end{align}
	for every $x \in \M$ and $y \in \alg N$. The states $\varphi$ and $\psi$ are called the \emph{marginals} of $\hat \varphi$.
\end{defn}

In the literature on stochastic dynamical systems a coupling of $\varphi$ and $\psi$ with an additional invariance property is also called a \emph{joining}. This term goes back to the work of H.~F\"urstenberg (\eg, \cite{Fuerstenberg}) and refers to the notion of a joint distribution. However, we decided to use the term ``coupling'', to refer to the established coupling method in classical probability theory (see \eg \cite{Lindvall,Thorisson}).

For two fixed states $\varphi$ and $\psi$ the set of all couplings of $\varphi$ and $\psi$ is obviously a convex set containing the product state $\varphi \tensor \psi$. Moreover, $\varphi \tensor \psi$ is the only coupling of $\varphi$ and~$\psi$ if and only if one of the states $\varphi$ or $\psi$ is pure. For the algebra~$M_k$ of \mbox{$(k\times k)$-matrices} the set of all couplings with fixed marginals is frequently studied in the context of entanglement. For more detailed characterizations of this set we refer to \cite{Rudolph, Parthasarathy}.

In classical probability theory for each probability measure $\mu$ on a locally compact Hausdorff space~$\Omega$ there is a (unique) measure on the product space $\Omega \times \Omega$ that is concentrated on the diagonal $\Delta := \{(\omega, \omega) \;|\; \omega \in \Omega \}$ and whose marginals are both equal to $\mu$. This measure is called the \emph{diagonal measure} of $\mu$. It is given by the extension of $\mu_\Delta(A \times B) := \mu(A \cap B)$ for all measurable subsets $A, B\subseteq \Omega$. For a state $\varphi$ on a non-commutative algebra $\M$, however, the straightforward definition via $\varphi_\Delta(a \tensor b) := \varphi(a \cdot b)$ for all $a, b \in \M$ fails to be positive in general. A workaround to deal with this issue was proposed by R.~Duvenhage \cite{Duvenhage08} and F.~Fidaleo \cite{Fidaleo09}. In the following we repeat this construction and derive some elementary properties.

Let $\M$ be a von Neumann algebra and assume that $\M$ is given in its standard representation on $L^2(\M)$. By definition the elements of $\M$ and $\M'$ commute. Therefore, we obtain a representation of the algebraic tensor product $\M \atensor \M'$ on the standard Hilbert space $L^2(\M)$ by putting 
\begin{equation*}
	\pi_\Delta: \M \atensor \M' \to \B \bigl(L^2(\M) \bigr), \quad x \tensor y' \mapsto x \cdot y' 
\end{equation*}
and extending linearly. We call $\pi_\Delta$ the \emph{diagonal representation}. In general $\pi_\Delta$ cannot be extended to a normal representation of $\M \tensor \M'$. In fact, it can be extended normally if and only if the algebra $\M$ is atomic (see \cite{Fidaleo01}).
%\enlargethispage{2ex}

\pagebreak[3]
\begin{defn}
	Let $\varphi$ be a normal state on $\M$ and denote by $\xi_\varphi \in L^2(\M)$ its GNS-vector. The \emph{diagonal state} of $\varphi$ is the state $\varphi_\Delta$ on the algebraic tensor product $\M \atensor \M'$ given by the linear extension of 
	\begin{equation*}
		\varphi_\Delta(x \tensor y') := \scal[\big]{\pi_\Delta(x \tensor y') \, \xi_\varphi, \xi_\varphi} = \scal{xy' \, \xi_\varphi, \xi_\varphi} 
		\qquad 
		(x \in \M, y' \in \M').
	\end{equation*}
\end{defn}

The diagonal state satisfies $\varphi_\Delta(x \tensor \one) = \varphi(x)$ and $\varphi_\Delta(\one \tensor  y') = \varphi'(y')$ for all $x \in \M$ and \mbox{$y'\in \M'$}. Thus, apart from its algebraic domain, $\varphi_\Delta$ is a coupling of the state~$\varphi$ and its opposite state $\varphi'$. 

\begin{expl}
	\label{expl:diag_state_BH}
	Let $\varphi$ be a normal state on $\BH$ and $\varphi(x) = \sum_{i \in I} \lambda_i \scal{x e_i, e_i}$, $x \in \BH$, a spectral decomposition with an orthonormal system $(e_i)_{i \in I}$ and coefficients $0 \le \lambda_i \le 1$, $\sum_{i \in I} \lambda_i = 1$. Then the normal extension of the diagonal state $\varphi_\Delta$ to $\BH \tensor \BH' = \B(\H \tensor \bar\H)$ is given by 
	\begin{align*}
		\varphi_\Delta(z) &= \scal{z \, \xi_\varphi, \xi_\varphi}
		&
		&\text{with}
		&
		\xi_\varphi &:= \sum\nolimits_{i \in I} \sqrt{\lambda_i} \; e_i \tensor e_i
	\end{align*}
	for every $z \in \B(\H \tensor \bar\H)$. 
\end{expl}

The following proposition states that the map $\varphi \mapsto \varphi_\Delta$ naturally respects the decomposition of the algebra into direct sums and tensor products. The precise formulation though looks a bit technical. We leave the straightforward proof to the reader.

\pagebreak[3]
\begin{prop}	\label{prop:diag_sum_prod}
	Let $\varphi$ and $\psi$ be normal states on von Neumann algebras $\M$ and $\alg N$, respectively.
	\begin{enumerate}
	\item 
		For $0 \le \lambda \le 1$ consider the normal state $\lambda \varphi \oplus (1-\lambda) \psi$ on the direct sum $\M \oplus \alg N$. Then
		\begin{equation*}
			\bigl( \lambda \varphi \oplus (1-\lambda) \psi \bigr)_\Delta = \bigl( \lambda \varphi_\Delta \oplus (1-\lambda) \psi_\Delta \bigr) \circ P \;,
		\end{equation*}
		where $P:(\M \oplus \alg N) \atensor (\M' \oplus \alg N') \to (\M \atensor \M') \oplus (\alg N \atensor \alg N')$ denotes the canonical projection given by the linear extension of $P\bigl( (x\oplus y) \tensor (x' \oplus y') \bigr) := (x \tensor x') \oplus (y \tensor y')$ for all $x \in \M$, $x' \in \M'$, $y \in \alg N$, $y' \in \alg N'$.
	\item
		Consider the tensor product state $\varphi \tensor \psi$ on $\M \tensor \alg N$. Then
		\begin{equation*}
			(\varphi \tensor \psi)_\Delta = (\varphi_\Delta \tensor \psi_\Delta) \circ \sigma \;,
		\end{equation*}
		where $\sigma:(\M \atensor \alg N) \atensor (\M' \atensor \alg N') \to (\M \atensor \M') \atensor (\alg N \atensor \alg N')$ denotes the tensor flip of the inner pair of tensor factors.
	\end{enumerate}
\end{prop}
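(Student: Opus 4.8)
\textit{Proof plan.} The plan is to reduce both identities to one-line computations with GNS vectors in the standard representation, using only that standard forms behave canonically under direct sums and tensor products; everything takes place on algebraic tensor products, so no normality or extension question arises at this stage.

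For (i), realize $\M \oplus \alg N$ in standard form on $L^2(\M) \oplus L^2(\alg N)$ acting diagonally. I would first record the two structural facts that make the computation work: the commutant of $\M \oplus \alg N$ in $\B\bigl(L^2(\M) \oplus L^2(\alg N)\bigr)$ is exactly $\M' \oplus \alg N'$ (any commuting operator must commute with the two central projections $\one \oplus 0$ and $0 \oplus \one$, hence is block diagonal with diagonal blocks in $\M'$ and $\alg N'$), and the standard GNS vector of $\lambda \varphi \oplus (1-\lambda)\psi$ is $\xi := \sqrt\lambda\, \xi_\varphi \oplus \sqrt{1-\lambda}\, \xi_\psi$ (it lies in the positive cone and implements the state). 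Then the diagonal representation sends $(x \oplus y) \tensor (x' \oplus y')$ to the operator product $(x \oplus y)(x' \oplus y') = xx' \oplus yy'$, and pairing against $\xi$ splits as
\begin{equation*}
	\scal[\big]{(xx' \oplus yy')\,\xi, \xi} = \lambda \scal{xx' \xi_\varphi, \xi_\varphi} + (1-\lambda)\scal{yy' \xi_\psi, \xi_\psi} = \lambda\, \varphi_\Delta(x \tensor x') + (1-\lambda)\, \psi_\Delta(y \tensor y').
\end{equation*}
By the definition of $P$ this is exactly $\bigl(\lambda \varphi_\Delta \oplus (1-\lambda)\psi_\Delta\bigr)\bigl(P((x \oplus y) \tensor (x' \oplus y'))\bigr)$; since both sides are linear in the argument, linear extension finishes (i). One should add a remark that $P$ is well defined, because $(\M \oplus \alg N) \atensor (\M' \oplus \alg N')$ splits canonically into the four summands $\M \atensor \M'$, $\M \atensor \alg N'$, $\alg N \atensor \M'$, $\alg N \atensor \alg N'$ and $P$ is the projection onto the first and the fourth.

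Part (ii) is the same argument with $\oplus$ replaced by $\tensor$: the standard Hilbert space of $\M \tensor \alg N$ is $L^2(\M) \tensor L^2(\alg N)$, its commutant is $\M' \tensor \alg N'$, and the standard GNS vector of $\varphi \tensor \psi$ is $\xi_\varphi \tensor \xi_\psi$. The diagonal representation sends $(x \tensor y) \tensor (x' \tensor y')$ to $(x \tensor y)(x' \tensor y') = xx' \tensor yy'$ (using that $x, x'$ act on the first leg and $y, y'$ on the second), so pairing against $\xi_\varphi \tensor \xi_\psi$ factors as $\scal{xx' \xi_\varphi, \xi_\varphi}\, \scal{yy' \xi_\psi, \xi_\psi} = \varphi_\Delta(x \tensor x')\, \psi_\Delta(y \tensor y')$, which by the definition of the flip $\sigma$ equals $(\varphi_\Delta \tensor \psi_\Delta)\bigl(\sigma((x \tensor y) \tensor (x' \tensor y'))\bigr)$; extend linearly. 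There is no genuine obstacle here, the argument being pure bookkeeping; the only points deserving a word of care are the identification of the standard form, GNS vector, and commutant of a direct sum and of a tensor product, and the well-definedness of the rearrangement maps $P$ and $\sigma$ on the algebraic tensor products, both of which are immediate from the universal property of $\atensor$.
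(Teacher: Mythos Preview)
Your proposal is correct and is precisely the routine verification the paper has in mind: the paper itself writes ``We leave the straightforward proof to the reader'' and gives no argument, so there is nothing to compare against beyond noting that your computation with the standard GNS vectors of the direct sum and tensor product is exactly the intended one.
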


\pagebreak[3]
\begin{prop}	\label{prop:diag_mod_grp}
	Let $\alpha:\M \to \alg N$ be an injective unital normal \Star homomorphism of von Neumann algebras $\M$ and $\alg N$. Let $\psi$ be a faithful normal state on $\alg N$ and put $\varphi := \psi \circ \alpha$. Suppose that $\alpha$ commutes with the corresponding modular automorphism groups, \ie, $\sigma_t^\psi \circ \alpha = \alpha \circ \sigma_t^\varphi$ for every $t \in \R$. Then 
	\begin{equation*}
		\psi_\Delta \circ (\alpha \tensor \alpha') = \varphi_\Delta \;.
	\end{equation*}
\end{prop}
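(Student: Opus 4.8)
The plan is to construct the canonical isometry $V:L^2(\M)\to L^2(\alg N)$ implementing $\alpha$ between the two standard forms, to check that $V$ intertwines the left actions, the modular conjugations and hence the opposite right actions, and then to read the identity of the two diagonal states off directly from $V^*V=\Id$.

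First I would note that $\varphi=\psi\circ\alpha$ is faithful, since $\psi$ is faithful and $\alpha$ is injective; hence $\xi_\varphi,\xi_\psi$ are cyclic separating GNS vectors with $J_\varphi\xi_\varphi=\xi_\varphi$, $J_\psi\xi_\psi=\xi_\psi$, and I realize $\M$ and $\alg N$ on the GNS spaces of $\varphi$ and $\psi$, so that the modular conjugations $J$ entering the opposite maps are $J_\varphi$ and $J_\psi$. On the dense domain $\M\xi_\varphi$ define $V(x\xi_\varphi):=\alpha(x)\xi_\psi$; since
\begin{equation*}
	\norm{\alpha(x)\xi_\psi}^2=\psi\bigl(\alpha(x^*x)\bigr)=\varphi(x^*x)=\norm{x\xi_\varphi}^2 ,
\end{equation*}
the map $V$ is well defined and isometric, hence extends to an isometry $L^2(\M)\to L^2(\alg N)$, and checking on $\M\xi_\varphi$ gives $V\xi_\varphi=\xi_\psi$ and $Vx=\alpha(x)V$ for all $x\in\M$.

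The core of the argument is the relation $VJ_\varphi=J_\psi V$. With $\Delta_\varphi,\Delta_\psi$ the modular operators one has $\Delta_\varphi^{it}x\xi_\varphi=\sigma_t^\varphi(x)\xi_\varphi$, and the hypothesis $\sigma_t^\psi\circ\alpha=\alpha\circ\sigma_t^\varphi$ gives on $\M\xi_\varphi$
\begin{equation*}
	V\Delta_\varphi^{it}x\xi_\varphi=V\sigma_t^\varphi(x)\xi_\varphi=\sigma_t^\psi\bigl(\alpha(x)\bigr)\xi_\psi=\Delta_\psi^{it}\alpha(x)\xi_\psi=\Delta_\psi^{it}Vx\xi_\varphi ,
\end{equation*}
so $V\Delta_\varphi^{it}=\Delta_\psi^{it}V$ for all $t\in\R$; the usual analytic-continuation argument for one-parameter groups then yields that $V$ maps $\operatorname{dom}(\Delta_\varphi^{1/2})$ into $\operatorname{dom}(\Delta_\psi^{1/2})$ with $V\Delta_\varphi^{1/2}\subseteq\Delta_\psi^{1/2}V$. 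On the other hand $V(x^*\xi_\varphi)=\alpha(x)^*\xi_\psi$, and since $\M\xi_\varphi$ is a core for the Tomita operator $S_\varphi$ (the closure of $x\xi_\varphi\mapsto x^*\xi_\varphi$) and $V$ is bounded, this upgrades to $VS_\varphi\subseteq S_\psi V$. Writing $S_\varphi=J_\varphi\Delta_\varphi^{1/2}$, $S_\psi=J_\psi\Delta_\psi^{1/2}$ and inserting the intertwining of the $\Delta^{1/2}$'s, this says $VJ_\varphi\Delta_\varphi^{1/2}\eta=J_\psi V\Delta_\varphi^{1/2}\eta$ for every $\eta\in\operatorname{dom}(\Delta_\varphi^{1/2})$; as $\Delta_\varphi^{1/2}$ has dense range (because $\Delta_\varphi$ is injective) and both sides are bounded, $VJ_\varphi=J_\psi V$. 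I expect this passage --- which is in substance the statement that a state\ndash compatible \Star homomorphism extends to the standard forms --- to be the only genuine obstacle; the remaining steps are bookkeeping.

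To finish, for $y'\in\M'$ put $z:=J_\varphi y'J_\varphi\in\M$, so $y'=J_\varphi zJ_\varphi$ and $\alpha'(y')=J_\psi\alpha(z)J_\psi\in\alg N'$; then
\begin{equation*}
	Vy'\xi_\varphi=VJ_\varphi z\xi_\varphi=J_\psi Vz\xi_\varphi=J_\psi\alpha(z)\xi_\psi=\alpha'(y')\xi_\psi ,
\end{equation*}
and commuting $y'$ past elements of $\M$ gives $Vy'=\alpha'(y')V$ on $\M\xi_\varphi$, hence everywhere. Combining everything, for $x\in\M$ and $y'\in\M'$,
\begin{equation*}
	\psi_\Delta\bigl((\alpha\tensor\alpha')(x\tensor y')\bigr)
	=\scal{\alpha(x)\alpha'(y')\xi_\psi,\xi_\psi}
	=\scal{Vxy'\xi_\varphi,V\xi_\varphi}
	=\scal{xy'\xi_\varphi,\xi_\varphi}
	=\varphi_\Delta(x\tensor y') ,
\end{equation*}
where the second equality uses $\alpha(x)V=Vx$, $\alpha'(y')\xi_\psi=Vy'\xi_\varphi$ and $\xi_\psi=V\xi_\varphi$, and the third uses $V^*V=\Id$; linear extension over $\M\atensor\M'$ completes the proof.
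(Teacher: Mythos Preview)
Your proof is correct and follows essentially the same route as the paper: construct the GNS isometry $V$ implementing $\alpha$, show that it intertwines the modular conjugations, deduce $Vy'\xi_\varphi=\alpha'(y')\xi_\psi$, and read off the equality of the diagonal states. The only difference is cosmetic: the paper simply asserts that $V$ commutes with the modular operator and conjugation (citing the $^*$-preserving and modular-intertwining hypotheses), whereas you spell out the analytic-continuation step from $V\Delta_\varphi^{it}=\Delta_\psi^{it}V$ to $VJ_\varphi=J_\psi V$ via the Tomita operators; and in the final computation the paper moves $\alpha(x)$ to the second slot of the inner product rather than using $\alpha(x)V=Vx$ on the first.
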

\begin{proof}
	Let $\xi_\varphi \in L^2(\M)$ and $\xi_\psi \in L^2(\alg  N)$ denote the GNS-vectors of $\varphi$ and $\psi$, respectively. The homomorphism $\alpha$ gives rise to an isometry $v:L^2(\M) \to L^2(\alg N)$ by extending $v(x\xi_\varphi) = \alpha(x)\xi_\psi$ for all $x \in \M$. Since $\alpha$ is $^*$-preserving and commutes with the modular automorphism group, this isometry commutes with the modular operator and the modular conjugation. We therefore get
	\begin{equation*}
		v (JyJ) \xi_\varphi = Jv y\xi_\varphi = J\alpha(y) \xi_\psi = \alpha'(JyJ) \xi_\psi
	\end{equation*}
	for every $y\in \M$. Hence for every $x \in \M$ and $y' \in \M'$ we obtain
	\begin{align*}
		\psi_\Delta \bigl( (\alpha \tensor \alpha')(x \tensor y') \bigr) 
		&= \scal{\alpha'(y')\xi_\psi, \; \alpha(x^*)\xi_\psi}
		= \scal{vy'\xi_\varphi, vx^*\xi_\varphi}
		= \scal{y'\xi_\varphi, x^*\xi_\varphi} 
		\\
		&= \varphi_\Delta(x \tensor y'),
	\end{align*}
	that is, we have $\psi_\Delta \circ (\alpha \tensor \alpha') = \varphi_\Delta$ by linear extension.
\end{proof}

\section{Normal Extensions of the Diagonal State}
\label{sec:extension}

In this section we investigate the question whether for a given normal state $\varphi$ on a von Neumann algebra $\M$ the diagonal state $\varphi_\Delta$ on $\M \atensor \M'$ admits a (necessary unique) normal extension to the von Neumann tensor product $\M \tensor \M'$. It turns out that this requirement essentially restricts us to purely atomic von Neumann algebras, \ie algebras of the form $\M = \bigoplus_{i \in I} \B(\H_i)$ for some index set $I$ and some Hilbert spaces $\H_i$, $i \in I$.

\begin{thm}	\label{thm:normal_extension}
	Let $\varphi$ be a normal state on a von Neumann algebra~$\M$. Denote by $c \in \M$ the central support of $\varphi$. If the diagonal state $\varphi_\Delta$ admits a normal extension to $\M \tensor \M'$, then $c \M$ is purely atomic.
	\label{thm:diagonal_state}
\end{thm}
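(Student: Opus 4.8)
The plan is to reduce the statement about the \emph{state} $\varphi_\Delta$ to the characterisation of when the diagonal \emph{representation} $\pi_\Delta$ extends normally, which was recalled before the theorem: $\pi_\Delta$ extends to a normal representation of $\M \tensor \M'$ if and only if the underlying algebra is purely atomic (\cite{Fidaleo01}). Concretely, I would first pass to the faithful case by reducing with the central support $c$, then identify the GNS representation of $\varphi_\Delta$ with the diagonal representation of $c\M$, and finally show that a normal extension of $\varphi_\Delta$ forces this diagonal representation to extend normally, so that $c\M$ is purely atomic.

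For the first step, note that since $c$ is the central support of $\varphi$, the restriction $\varphi|_{c\M}$ is a \emph{faithful} normal state on $c\M$; its standard GNS vector is again $\xi_\varphi$ (from $\varphi(c)=1$ one gets $(\one-c)\xi_\varphi=0$), and $cL^2(\M)$ is a standard Hilbert space for $c\M$ with commutant $(c\M)' = \M'c$. As $c$ lies in the centre of both $\M$ and $\M'$, the projection $c\tensor c$ is central in $\M\tensor\M'$, so $c\M\tensor(c\M)'$ is (isomorphic to) the direct summand $(c\tensor c)(\M\tensor\M')(c\tensor c)$. If $\hat\varphi$ is a normal extension of $\varphi_\Delta$, then $\hat\varphi(c\tensor c)=\varphi_\Delta(c\tensor c)=\varphi(c)=1$, so the restriction of $\hat\varphi$ to that summand is a normal \emph{state} on $c\M\tensor(c\M)'$, and on the algebraic tensor product it agrees with $(\varphi|_{c\M})_\Delta$, both sending $a\tensor b'$ to $\scal{ab'\xi_\varphi,\xi_\varphi}$. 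Replacing $\M$ by $c\M$ and $\varphi$ by $\varphi|_{c\M}$, I may henceforth assume $\varphi$ is faithful and $c=\one$, and must show that $\M$ is purely atomic.

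Now, viewing $\varphi_\Delta$ as a state on the $^*$-algebra $\M\atensor\M'$, its GNS triple is $\bigl(\pi_\Delta, \overline{\M\M'\xi_\varphi}, \xi_\varphi\bigr)$; since $\varphi$ is faithful, $\xi_\varphi$ is cyclic for $\M'$, hence $\overline{\M\M'\xi_\varphi}=L^2(\M)$ and the GNS triple is exactly $(\pi_\Delta, L^2(\M), \xi_\varphi)$. Let $\hat\varphi$ be a normal extension of $\varphi_\Delta$ to $\M\tensor\M'$, with GNS triple $(\pi_{\hat\varphi}, \H_{\hat\varphi}, \Omega)$; normality of $\hat\varphi$ makes $\pi_{\hat\varphi}$ normal. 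Because $\M\atensor\M'$ is $\sigma$-weakly dense in $\M\tensor\M'$ and $\pi_{\hat\varphi}$ is normal, $\pi_{\hat\varphi}(\M\atensor\M')$ is $\sigma$-weakly dense in $\pi_{\hat\varphi}(\M\tensor\M')$, so the closed subspace $\overline{\pi_{\hat\varphi}(\M\atensor\M')\Omega}$ is invariant under all of $\pi_{\hat\varphi}(\M\tensor\M')$ and therefore equals $\H_{\hat\varphi}$. Hence $\Omega$ is cyclic for $\pi_{\hat\varphi}|_{\M\atensor\M'}$, and by uniqueness of the GNS construction this restriction is unitarily equivalent to $\pi_\Delta$; transporting $\pi_{\hat\varphi}$ along that unitary produces a normal representation of $\M\tensor\M'$ on $L^2(\M)$ extending $\pi_\Delta$. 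By the cited result, this forces $\M$ to be purely atomic, and undoing the reduction of the previous paragraph gives that $c\M$ is purely atomic.

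The step I expect to be the most delicate is the reduction to the faithful case: one must keep careful track of the reduced standard form of $c\M$, of its GNS vector and commutant (here the identity $(c\M)'=\M'c$ and the fact that the modular conjugation fixes central projections are used implicitly), and verify that the restriction of a normal extension descends to a genuine normal \emph{state} on $c\M\tensor(c\M)'$ rather than merely a positive functional — this is exactly where the centrality of $c\tensor c$ together with the normalisation $\hat\varphi(c\tensor c)=\varphi(c)=1$ enters. The other point deserving care is the $\sigma$-weak density argument identifying the GNS space of $\varphi_\Delta$ with the whole of $\H_{\hat\varphi}$, since without it one would only obtain a normal representation on a subspace and could not invoke the characterisation of $\pi_\Delta$.
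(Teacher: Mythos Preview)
Your argument is correct, but it follows a genuinely different route from the paper's. You reduce to the faithful case, identify the GNS representation of the extended state with a normal extension of $\pi_\Delta$ on all of $L^2(\M)$, and then invoke Fidaleo's characterisation as a black box. The paper instead reproves what it needs from \cite{Fidaleo01}: it first restricts the diagonal state to the centre $Z(\M)$, where the problem becomes a measure-theoretic one (Lemma~\ref{lem:diagonal_Radon}: if the diagonal measure of $\nu$ is absolutely continuous with respect to $\mu\tensor\mu$, then $\nu$ is purely atomic), yielding a decomposition of $c\M$ into factors $\M_i$; it then shows separately (Lemma~\ref{lem:diagonal_factor}) that on a factor a normal extension of $\varphi_\Delta$ forces Type~I, using essentially your GNS argument but exploiting $\M\cap\M'=\C\one$ to conclude $\pi_{\varphi_\Delta}(\M\tensor\M')=\B(L^2(\M))$ directly. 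Your approach is shorter and cleaner because it leverages the cited result in full; the paper's approach is more self-contained and exposes the two mechanisms (atomicity of the centre, Type~I of the factor summands) explicitly, at the cost of the extra lemmas.
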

\begin{rmk}
	\begin{enumerate}
	\item 
		The converse obviously holds, too (\cf Example~\ref{expl:diag_state_BH} and Proposition~\ref{prop:diag_sum_prod}).
	\item
		Our proof essentially refines the arguments given by Fidaleo \cite{Fidaleo01}, who studied normal extensions of the diagonal representation.
	\end{enumerate}
\end{rmk}

For the proof we first establish two lemmas dealing separately with the abelian case and factors. For the abelian case consider a locally compact Hausdorff space $\Omega$ equipped with a Radon measure $\mu$. For a measure $\nu$ on $\Omega$ we write $\nu \ll \mu$ if $\nu$ is absolutely continuous with respect to $\mu$. 

\begin{lemma}	\label{lem:diagonal_Radon}
	Let $\nu \ll \mu$ be a Radon probability measure on $\Omega$ and denote by $\nu_\Delta$ its diagonal measure on $\Omega \times \Omega$. If $\nu_\Delta \ll \mu \tensor \mu$ then $\nu$ is purely atomic. 
\end{lemma}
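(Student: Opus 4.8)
The plan is to isolate the non-atomic part of $\nu$ and show that it is forced to vanish. First I would split $\nu = \nu_{\mathrm a} + \nu_{\mathrm c}$ into its purely atomic part $\nu_{\mathrm a}$ and its continuous (\ie non-atomic) part $\nu_{\mathrm c}$. Since $\nu_\Delta(A \times B) = \nu(A\cap B)$ depends additively on $\nu$, the two associated diagonal measures add up to $\nu_\Delta$, that is, $\nu_\Delta = (\nu_{\mathrm a})_\Delta + (\nu_{\mathrm c})_\Delta$; as both summands are positive measures, the hypothesis $\nu_\Delta \ll \mu \tensor \mu$ entails $(\nu_{\mathrm c})_\Delta \ll \mu \tensor \mu$. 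Thus it suffices to show that a non-atomic measure $\nu_{\mathrm c}$ with $(\nu_{\mathrm c})_\Delta \ll \mu \tensor \mu$ must be zero.

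For this I would exhibit a $\mu \tensor \mu$-null set that carries all of $(\nu_{\mathrm c})_\Delta$. Being a finite Radon measure, $\mu$ has an at most countable set $D$ of atoms, which is therefore Borel; I set $\Delta_0 := \{(\omega, \omega) : \omega \in \Omega \setminus D\} = \Delta \cap \bigl((\Omega\setminus D) \times (\Omega\setminus D)\bigr)$, a Borel subset of the closed diagonal $\Delta$. On $\Omega \setminus D$ the measure $\mu$ is non-atomic, so a routine application of Tonelli's theorem to $\one_{\Delta_0}$ gives $(\mu \tensor \mu)(\Delta_0) = 0$. On the other hand, $(\nu_{\mathrm c})_\Delta$ is nothing but $\nu_{\mathrm c}$ transported to $\Delta$ along the homeomorphism $\omega \mapsto (\omega,\omega)$, and since $\nu_{\mathrm c}$ is non-atomic and $D$ is countable we obtain $(\nu_{\mathrm c})_\Delta(\Delta_0) = \nu_{\mathrm c}(\Omega\setminus D) = \nu_{\mathrm c}(\Omega)$. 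Comparing the two values, $(\nu_{\mathrm c})_\Delta \ll \mu \tensor \mu$ forces $\nu_{\mathrm c}(\Omega) = 0$, hence $\nu = \nu_{\mathrm a}$ is purely atomic. (The hypothesis $\nu \ll \mu$ is in fact not needed here; it is the shape in which the statement enters the proof of Theorem~\ref{thm:normal_extension}.)

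The step I expect to require the most care is the measure-theoretic bookkeeping on the product $\Omega \times \Omega$ over a general locally compact Hausdorff space: one must check that $\Delta$ and $\Delta_0$ lie in the domain of $\mu \tensor \mu$ and that Tonelli's theorem applies to $\one_{\Delta_0}$. This is only a technical nuisance, since a finite Radon measure is concentrated on a $\sigma$-compact set, so one may as well replace $\Omega$ by the support of $\mu$ (together with that of $\nu$) and work with the Radon product there; alternatively one decomposes $\Delta_0$ along a countable compact exhaustion. The conceptual content --- that a non-atomic measure lives on the diagonal, which is a null set for any product of non-atomic measures --- is exactly the mechanism that will later obstruct normal extensions in the factor and abelian cases.
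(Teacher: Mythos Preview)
Your argument is correct in substance and rests on the same core fact as the paper --- the diagonal is null for any product involving a non-atomic factor --- but you organize the decomposition differently. You split $\nu = \nu_{\mathrm a} + \nu_{\mathrm c}$ and show that $(\nu_{\mathrm c})_\Delta$ is carried by a $\mu\tensor\mu$-null subset of the diagonal. The paper instead decomposes $\mu = \mu_a + \mu_c$, expands $\mu\tensor\mu$ into four pieces, observes that only $\mu_a\tensor\mu_a$ can charge $\Delta$, and concludes directly that $\nu_\Delta \ll \mu_a\tensor\mu_a$; since $\mu_a\tensor\mu_a$ is purely atomic, so is $\nu_\Delta$, and hence its marginal $\nu$. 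One small slip: you write ``being a finite Radon measure, $\mu$\ldots'', but $\mu$ is not assumed finite --- only $\nu$ is. This is harmless, since $\nu$ (a Radon probability measure) is inner regular and hence concentrated on a $\sigma$-compact set on which $\mu$ is $\sigma$-finite, and you essentially acknowledge this in your final paragraph. The paper's route has the practical advantage that it never needs to name the set $D$ of atoms of $\mu$ or argue about its countability and Borel measurability, so the bookkeeping you flag simply does not arise; your route, in return, makes transparent that the hypothesis $\nu\ll\mu$ plays no role.
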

\begin{proof}
	Decompose $\mu =: \mu_a + \mu_c$ into its purely atomic part~$\mu_a$ and its atomless part $\mu_c$. Then 
	\begin{equation*}
		\mu \tensor\mu = \mu_a \tensor \mu_a + \mu_a \tensor \mu_c + \mu_c \tensor \mu_a + \mu_c \tensor \mu_c .
	\end{equation*}
	On the right hand side only the purely atomic measure $\mu_a \tensor \mu_a$ does not vanish on the diagonal \mbox{$\Delta := \{(\omega, \omega) \;|\; \omega \in \Omega\}$}. Since $\Delta$ contains the support of $\nu_\Delta$, it follows that $\nu_\Delta \ll \mu_a \tensor \mu_a$. Consequently, also $\nu_\Delta$ is purely atomic; and so is its marginal $\nu$.
\end{proof}

\begin{lemma}	\label{lem:diagonal_factor}
	Let $\M$ be a factor and let $\varphi$ be a normal state~$\M$. If  the diagonal state~$\varphi_\Delta$ extends to a normal state on $\M \tensor \M'$, then $\M$ is of Type~I.
\end{lemma}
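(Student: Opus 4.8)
The plan is to prove the contrapositive: assume $\M$ is a factor not of Type~I and show that $\varphi_\Delta$ cannot extend normally to $\M \tensor \M'$. A factor that is not of Type~I is either of Type~II or Type~III, and in both cases it is \emph{diffuse} in the sense that it contains no minimal projections. The key structural fact I would exploit is that in such a factor one can find, for every $n \in \N$, a partition of unity $p_1, \dots, p_n \in \M$ into mutually orthogonal projections that are all Murray--von Neumann equivalent; let $v_1, \dots, v_n \in \M$ be partial isometries implementing the equivalences, say $v_k^* v_k = p_1$ and $v_k v_k^* = p_k$, with $v_1 = p_1$. These generate a copy of $M_n(\C)$ inside $\M$ (a "matrix unit subsystem").

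The idea is then that the diagonal state restricted to the subalgebra generated by these matrix units together with their $J$-conjugates looks like the diagonal (maximally entangled) state on $M_n \tensor M_n'$, and the norm of the corresponding diagonal vector forces a dimension constraint. Concretely, form the element $w := \sum_{k=1}^n v_k \tensor J v_k J \in \M \atensor \M'$. Using that $\varphi_\Delta(x \tensor y') = \scal{x y' \xi_\varphi, \xi_\varphi}$ and that $J v_k J$ acts on $\xi_\varphi$ in a way dual to $v_k$, a direct computation (of the kind appearing in the proof of Proposition~\ref{prop:diag_mod_grp}, relating $J \M J$ acting on GNS vectors to the right action) shows that $w$, suitably normalized, is close to a projection — or more precisely that $\varphi_\Delta(w^* w)$ grows like $n$ while $w^* w \le n \cdot \one$ only trivially, so that a normal state evaluated on $w^*w$ would have to be bounded in a way incompatible with $n \to \infty$. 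The cleanest route is: if $\varphi_\Delta$ extends normally, then on the von Neumann subalgebra $\M_0 \tensor \M_0' \cong M_n \tensor M_n$ (where $\M_0 \cong M_n$ is the matrix unit subalgebra) the restricted state is a coupling of $\varphi|_{\M_0}$-type data, and its diagonal structure pins it down; but the point is that the \emph{whole} construction can be carried out compatibly for all $n$ simultaneously only if $\M$ had a minimal projection to begin with. I would make this precise by showing the extension would yield a normal state on $\M \tensor \M'$ whose support projection is the (purely algebraic) "diagonal projection" $\sum_k p_k \tensor J p_k J$-type limit, contradicting diffuseness — essentially the observation that a normal extension of $\pi_\Delta$ exists iff $\M$ is atomic, combined with the factor hypothesis.

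More carefully, the argument I would actually write runs as follows. Suppose $\varphi_\Delta$ extends to a normal state $\hat\varphi$ on $\M \tensor \M'$. First reduce to $\varphi$ faithful: replacing $\M$ by $c\M$ where $c$ is the central support of $\varphi$ changes nothing since $\M$ is a factor (so $c = \one$ unless $\varphi = 0$), hence $\varphi$ is automatically faithful on the factor $\M$. Now use faithfulness to invoke Tomita--Takesaki: $\xi_\varphi$ is cyclic and separating, and $\M' = J\M J$. For a partition of unity $(p_k)_{k=1}^n$ into equivalent projections with partial isometries $(v_k)$ as above, consider the vectors $\eta_k := v_k J v_k J \, \xi_\varphi \in L^2(\M)$. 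Using $J v_k J \,\xi_\varphi$ = (right multiplication by $v_k^*$ applied to $\xi_\varphi$, up to the modular operator) one computes $\scal{\eta_k, \eta_l} = \delta_{kl} \cdot \varphi(p_1)$-type expressions, so the $\eta_k$ are orthogonal with $\sum_k \norm{\eta_k}^2 = \varphi(\one) = 1$. The element $u := \sum_k v_k \tensor J v_k J$ then satisfies $\hat\varphi(u^* u) = \sum_{k,l} \scal{\eta_k,\eta_l}$-cross terms, and one finds $\hat\varphi(u^*u) = 1$ while also $\hat\varphi(u u^*)$ can be computed; the contradiction is extracted by noting $u^* u$ and $u u^*$ are specific projections whose "sizes" are incompatible with normality as $n \to \infty$ unless $p_1$ is minimal. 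The honest contradiction: normality of $\hat\varphi$ means $\hat\varphi$ is $\sigma$-weakly continuous, but the net of diagonal-type elements built from ever-finer partitions converges $\sigma$-weakly to $\one \tensor \one$ in $\M \tensor \M'$ while $\hat\varphi$ evaluated on them stays bounded away from $1$ — this is exactly where diffuseness (no minimal projection to terminate the refinement) does the work.

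The main obstacle I anticipate is the bookkeeping in the GNS picture: carefully tracking how $J v_k J$ acts on $\xi_\varphi$ and producing the right orthogonality and norm identities for the $\eta_k$, especially since the $v_k$ need not be fixed by the modular group so the modular operator $\Delta_\varphi^{1/2}$ intrudes. I would sidestep part of this by choosing, via diffuseness, the projections $p_k$ inside the centralizer $\M_\varphi$ (the fixed-point algebra of the modular group), which is still a diffuse von Neumann algebra when $\M$ is a non-Type-I factor — then $v_k$ can also be taken in a suitable subalgebra on which the modular action is well-controlled, and the computations collapse to the finite-dimensional matrix-unit identities. Tying this choice together with the refinement argument is the delicate point; everything else is routine once the matrix units are in place.
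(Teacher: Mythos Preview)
Your approach is genuinely different from the paper's, but as written it has real gaps.

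The first concrete error: you assert that since $\M$ is a factor the central support of $\varphi$ is $\one$ and ``hence $\varphi$ is automatically faithful.'' Central support $\one$ does \emph{not} imply faithfulness; any nonzero normal state on a factor has central support $\one$, yet a vector state on $\B(\H)$, or $\tau(p\,\cdot\,p)/\tau(p)$ on a II$_1$ factor, is not faithful. Your later GNS manipulations tacitly rely on $\xi_\varphi$ being separating. Second, the contradiction is never pinned down. The natural object $q_n := \sum_{k=1}^n p_k \tensor J p_k J$ is a \emph{decreasing} net of projections under refinement, with $\varphi_\Delta(q_n)=1$ in the tracial case, and the relevant statement is $q_n \searrow 0$ in $\M \tensor \M'$ --- not convergence to $\one\tensor\one$ as you write. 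Establishing both $q_n\to 0$ and $\varphi_\Delta(q_n)\not\to 0$ for an arbitrary (non-tracial, possibly non-faithful) normal state is precisely the hard part, and your fallback of choosing the matrix units inside the centralizer $\M_\varphi$ is not justified: for a general normal state on a Type~III factor the centralizer need not be diffuse (it can even be trivial), so the required partitions may simply not exist there. The alternative computation with $u=\sum_k v_k\tensor Jv_kJ$ is left at the level of ``sizes are incompatible,'' which is not an argument; in fact $u^*u = n\,(p_1\tensor Jp_1J)$ and $\varphi_\Delta(uu^*)=n$ are perfectly compatible with $\norm{u}^2=n$, so something further is needed.

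By contrast, the paper's proof is a few lines of structure theory with no diffuseness argument, no matrix units, and no modular bookkeeping: since $\M\cap\M'=\C\one$, the vector $\xi_\varphi$ is cyclic for $(\M\cup\M')''=\B\bigl(L^2(\M)\bigr)$, so the GNS representation of the assumed normal extension of $\varphi_\Delta$ is a normal extension of $\pi_\Delta$; the factor $\M\tensor\M'$ has no nontrivial $\sigma$-weakly closed ideals, so this representation is faithful and identifies $\M\tensor\M'$ with $\B\bigl(L^2(\M)\bigr)$; finally $\M$, $\M'$, and $\M\tensor\M'$ share the same type, which must therefore be Type~I. This route works uniformly for all factor types and for non-faithful $\varphi$.
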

\begin{proof}
	We assume that $\M$ is given in its standard representation on~$L^2(\M)$. Denote by $\xi_\varphi \in L^2(\M)$ the GNS-vector of $\varphi$. Since $\M \cap \M' = \C \one$, the vector $\xi_\varphi$ is cyclic for the subalgebra jointly generated by $\M$ and~$\M'$. Therefore, the GNS-representation of $\M \tensor \M'$ with respect to $\varphi_\Delta$ is (up to unitary equivalence) a normal extension of the diagonal representation. In particular, the diagonal representation admits a normal extension to the von Neumann tensor product
	\begin{equation*}
		\pi_{\varphi_\Delta}:\M \tensor \M' \longrightarrow \B \bigl(L^2(\M) \bigr).
	\end{equation*}
	Since the factor $\M \tensor \M'$ does not have non-trivial $\sigma$-weakly closed ideals, this representation is faithful and, consequently, $\M \tensor \M'$ is isomorphic to the range
	\begin{equation*}
		\pi_{\varphi_\Delta}(\M \tensor \M') = (\M \cup \M')'' = (\M' \cap \M)' = \B \bigl( L^2(\M) \bigr) .
	\end{equation*}
	The assertion now follows from the fact that $\M$, $\M'$, and $\M \tensor \M'$ all share the same type.
\end{proof}

\begin{proof}[Proof of Theorem~\ref{thm:normal_extension}]
	We denote by $\varphi_\Delta$ the normal extension of the diagonal state. We write $Z(\M)$ for the center of $\M$ and denote by $(\Omega, \mu)$ the corresponding locally compact Hausdorff space with Radon measure satisfying $Z(\M) = L^\infty(\Omega,\mu)$. The restriction of~$\varphi$ to the center gives rise to a Radon probability measure $\nu \ll \mu$. On $Z(\M \tensor \M) = L^\infty(\Omega \times \Omega, \mu \tensor \mu)$ the state $\varphi_\Delta$ yields the corresponding diagonal measure~$\nu_\Delta$. Since $\varphi_\Delta$ is normal on $Z(\M \tensor \M)$, it follows that $\nu_\Delta \ll \mu \tensor \mu$. By Lemma~\ref{lem:diagonal_Radon} the measure $\nu$ is purely atomic, \ie, there are (countably many) minimal projections \mbox{$c_i \in Z(M)$}, $i \in I$, with $c = \sum_{i \in I} c_i$.
	
	Now fix an index $i \in I$ and put $\M_i := c_i \M$ and $\M_i' := c_i \M'$. Due to minimality of $c_i$ the von Neumann algebra $\M_i := c_i \M$ is a factor. On this algebra consider the renormalized normal state $\varphi_i(x) := \varphi(x) / \varphi(c_i)$, $x \in \M_i$. The diagonal state of $\varphi_i$ admits a normal extension to $\M_i \tensor \M_i'$ given~by  
	\begin{equation*}
		(\varphi_i)_\Delta(z) 
		:= \frac{\varphi_\Delta \bigl( (c_i \tensor c_i) z \bigr)}{\varphi_\Delta(c_i \tensor c_i)} 
		%= \frac{\varphi_\Delta \bigl( (p_i \tensor p_i) z \bigr)}{\varphi(p_i)} 
	\end{equation*}
	for every $z \in \M_{i} \tensor \M_{i}' \subseteq \M \tensor \M'$. With~Lemma~\ref{lem:diagonal_factor} we conclude that $\M_i$ is Type~I; in particular, $\M_i$ is purely atomic. The direct sum $c\M = \bigoplus_{i \in I} \M_i$ is therefore purely atomic, too.
\end{proof}

\section{A Quantum Coupling Inequality}
\label{sec:q_c_inequ}

After introducing the notion of couplings in general, we now show how a coupling can be used to derive estimates for the distance of its marginals. This is well known for probability measures. More precisely, for a coupling (joint distribution) $\hat \mu$ of two probability measures $\mu$ and $\nu$ on some locally compact Hausdorff space $\Omega$ the inequality
\begin{align*}
	\mu(A) - \nu(A) 
	&= \hat\mu(A \times \Omega) - \hat\mu(\Omega \times A)
	\\
	&= \hat\mu \bigl( (A \times \Omega) \cap \Delta^C \bigr) - \hat\mu \bigl( (\Omega \times A) \cap \Delta^C \bigr) 
	\\
	&\le \hat \mu \bigl( (\Omega \times \Omega) \cap \Delta^C \bigr) 
	\le \hat\mu(\Delta^C)
\end{align*}
holds for every measurable set $A \subseteq \Omega$, where $\Delta^C$ denotes the complement of the diagonal $\Delta := \{(\omega, \omega) \;|\; \omega \in \Omega\}$ in~$\Omega \times \Omega$. It follows that the variation distance of $\mu$ and $\nu$ can be estimated by
\begin{equation*}
	\norm{\mu - \nu} \le 2 \bigl( 1- \hat\mu(\Delta) \bigr).
\end{equation*}
This so called \emph{coupling inequality} provides one of the fundamental ingredients of the coupling method in classical probability theory. In this section we show that a similar estimate holds for quantum couplings.

\begin{defn}
	A projection $p_\Delta \in \M \tensor \M'$ is called a \emph{diagonal projection} if 
	\begin{equation*}
		p_\Delta (x \tensor \one) p_\Delta = p_\Delta (\one \tensor Jx^* J) p_\Delta 
	\end{equation*}
	holds for every $x \in \M$.
\end{defn}

Our main motivation for studying diagonal projections is founded on the following simple estimate, which provides a coupling inequality in the non-commutative context:

\begin{thm}[Quantum Coupling Inequality]
	\label{thm:QCI}
	Let $\varphi$ and $\psi$ be normal states on a von Neumann algebra $\M$. Furthermore, let $\hat \varphi$ be a coupling of $\varphi$ and the opposite state $\psi'$ and let $p_\Delta \in \M \tensor \M'$ be a diagonal projection. Then 
	\begin{equation*}
		\norm{\varphi - \psi} \le 4 \bigl( 1-\hat\varphi(p_\Delta) \bigr)^{1/2} .
	\end{equation*}
\end{thm}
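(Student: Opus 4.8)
The plan is to mimic the classical computation reproduced just before the theorem, replacing the indicator of the diagonal complement by $\one - p_\Delta$ and the variation norm by the norm of a normal functional, while paying the usual Cauchy–Schwarz price that non-commutativity forces on us. Concretely, for a self-adjoint $x \in \M$ with $\norm{x} \le 1$ I would write
\begin{align*}
	\varphi(x) - \psi(x)
	&= \hat\varphi(x \tensor \one) - \hat\varphi(\one \tensor Jx J) \\
	&= \hat\varphi\bigl( (x \tensor \one)(\one - p_\Delta) \bigr) - \hat\varphi\bigl( (\one \tensor JxJ)(\one - p_\Delta) \bigr) \\
	&\quad + \hat\varphi\bigl( (x \tensor \one) p_\Delta \bigr) - \hat\varphi\bigl( (\one \tensor JxJ) p_\Delta \bigr),
\end{align*}
using $\hat\varphi(\one \tensor JxJ) = \psi'(JxJ) = \psi(x)$ since $\psi'$ is given by the same GNS-vector. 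I would then argue that the last two terms on the right cancel: inserting $p_\Delta$ on both sides of $x \tensor \one$ and $\one \tensor Jx^*J$ is exactly the defining relation of a diagonal projection, and applying $\hat\varphi(p_\Delta \,\cdot\, p_\Delta)$ (a positive functional) to both sides gives $\hat\varphi\bigl(p_\Delta (x \tensor \one) p_\Delta\bigr) = \hat\varphi\bigl(p_\Delta (\one \tensor JxJ) p_\Delta\bigr)$ for self-adjoint $x$, which is the needed identity after moving one $p_\Delta$ around under $\hat\varphi$.

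For the two remaining ``off-diagonal'' terms I would estimate each by Cauchy–Schwarz with respect to the positive sesquilinear form $(a,b) \mapsto \hat\varphi(b^* a)$. For instance,
\[
	\bigl\lvert \hat\varphi\bigl( (x \tensor \one)(\one - p_\Delta) \bigr) \bigr\rvert
	\le \hat\varphi\bigl( (x \tensor \one)^2 \bigr)^{1/2} \, \hat\varphi\bigl( (\one - p_\Delta)^2 \bigr)^{1/2}
	\le \bigl( 1 - \hat\varphi(p_\Delta) \bigr)^{1/2},
\]
using $\norm{x \tensor \one} \le 1$ and that $\one - p_\Delta$ is a projection; the same bound holds for the term with $\one \tensor JxJ$. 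Adding the two contributions yields $\abs{\varphi(x) - \psi(x)} \le 2\bigl(1 - \hat\varphi(p_\Delta)\bigr)^{1/2}$ for every self-adjoint contraction $x$. Finally, taking the supremum over such $x$ and recalling that for the difference of two normal states $\norm{\varphi - \psi} = \sup\setof{\abs{(\varphi-\psi)(x)} : x = x^*, \norm{x} \le 1} \cdot 2$ — or, more carefully, that $\norm{\varphi-\psi} \le 2\sup\setof{\abs{(\varphi-\psi)(x)} : x=x^*, \norm{x}\le 1}$ once one splits a general contraction into real and imaginary parts — gives the stated factor $4$.

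The step I expect to be the main obstacle is making the cancellation of the ``diagonal'' terms fully rigorous: the defining identity $p_\Delta(x \tensor \one)p_\Delta = p_\Delta(\one \tensor Jx^*J)p_\Delta$ involves $Jx^*J$, whereas the marginal computation naturally produces $\one \tensor JxJ$, so I need $x$ self-adjoint (hence $Jx^*J = JxJ$) and must be slightly careful that $\hat\varphi\bigl((x\tensor\one)p_\Delta\bigr)$ — with $p_\Delta$ only on one side — really equals $\hat\varphi\bigl(p_\Delta(x\tensor\one)p_\Delta\bigr)$. This follows because $(x\tensor\one)p_\Delta - p_\Delta(x\tensor\one)p_\Delta = (\one - p_\Delta)(x\tensor\one)p_\Delta$, and applying Cauchy–Schwarz to this term shows it is itself $O\bigl((1-\hat\varphi(p_\Delta))^{1/2}\bigr)$ — so in fact one does not even need exact cancellation, just that these correction terms are absorbed into the same error bound, at worst enlarging the constant, which is harmless since the constant $4$ already has slack. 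The remaining ingredients (normality ensuring the GNS form is well defined, $\norm{\varphi-\psi}$ attained on self-adjoint contractions) are standard.
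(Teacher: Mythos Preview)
Your approach is essentially the paper's, but your accounting of the constant is off in two places that happen to cancel. First, the ``diagonal'' terms $\hat\varphi\bigl((x\tensor\one)p_\Delta\bigr) - \hat\varphi\bigl((\one\tensor JxJ)p_\Delta\bigr)$ do \emph{not} cancel exactly: only the $p_\Delta(\,\cdot\,)p_\Delta$ parts do, leaving the correction $\hat\varphi\bigl(p_\Delta^\perp(x\tensor\one - \one\tensor JxJ)p_\Delta\bigr)$, which is another term of size at most $2\bigl(1-\hat\varphi(p_\Delta)\bigr)^{1/2}$. So for self-adjoint contractions you get $4\bigl(1-\hat\varphi(p_\Delta)\bigr)^{1/2}$, not $2$. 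Second, since $\varphi-\psi$ is hermitian, its norm \emph{is} the supremum over self-adjoint contractions --- no extra factor $2$ is needed when passing to general $x$. These two slips compensate, so your final constant $4$ is correct, but your claim that ``the constant $4$ already has slack'' to absorb the correction terms is wrong: you need all four pieces.

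The paper avoids this bookkeeping by not splitting $x\tensor\one$ and $\one\tensor Jx^*J$ separately. It sets $x_\Delta := x\tensor\one - \one\tensor Jx^*J$ for an arbitrary (not necessarily self-adjoint) contraction $x$, so that $\varphi(x)-\psi(x)=\hat\varphi(x_\Delta)$, and then decomposes $x_\Delta = p_\Delta x_\Delta p_\Delta + p_\Delta x_\Delta p_\Delta^\perp + p_\Delta^\perp x_\Delta$. The first summand vanishes under $\hat\varphi$ by the diagonal-projection identity, and each of the other two is bounded by $\norm{x_\Delta}\,\hat\varphi(p_\Delta^\perp)^{1/2}\le 2\bigl(1-\hat\varphi(p_\Delta)\bigr)^{1/2}$ via Cauchy--Schwarz, giving the factor $4$ directly.
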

\begin{proof}
	Let $x \in \M$ with $\norm x \le 1$. For sake of brevity we put $x_\Delta := x \tensor \one - \one \tensor Jx^* J$. Then we have
	\begin{align*}
		\abs{\varphi(x) - \psi(x)} 
		&= \abs{\hat \varphi(x_\Delta)}
		\le \abs{\hat \varphi(p_\Delta \, x_\Delta \, p_\Delta)} + \abs{\hat \varphi(p_\Delta \, x_\Delta \, p_\Delta^\bot)} + \abs{\hat \varphi(p_\Delta^\bot \, x_\Delta)}.
	\end{align*}
	On the right hand side, the first term of the sum vanishes, because $p_\Delta$ is a diagonal projection. For the second and third term an application of the Cauchy Schwarz inequality shows that both are less than $\hat\varphi(p_\Delta^\perp)^{1/2} \cdot \norm{x_\Delta}$; and hence
	\begin{align*}
		\abs{\varphi(x) - \psi(x)} 
		&\le 2 \cdot \hat\varphi(p_\Delta^\perp)^{1/2} \cdot \norm{x_\Delta}
	\end{align*}
	Since $\norm x \le 1$ implies $\norm{x_\Delta} \le 2$, the assertion of the theorem follows by taking the supremum over all elements $x \in \M$ with $\norm x \le 1$.
\end{proof}

\begin{rmk}
	The estimate stated in the theorem is chosen to have a similar form as the classical inequality. In fact, the above proof yields the slightly better bound 
	\begin{equation}
		\norm{\varphi - \psi} \le 2 \, \bigl( 1 +\hat\varphi(p_\Delta)^{1/2} \bigr) \, \bigl( 1 - \hat\varphi(p_\Delta) \bigr)^{1/2}
		\label{eq:refined_c_ineq}
	\end{equation}
	if we apply the Cauchy Schwarz inequality more carefully. This estimates is not tight in general as the following simple example shows:
\end{rmk}

\begin{expl}
	\label{ex:tightness_c_ineq}
	Consider the algebra $M_2$ of $(2\times 2)$-matrices and the two states 
	\begin{align*}
		\varphi(x) &:= x_{1,1},
		&
		\psi(x) &:= \tfrac12 \scal{x \; \begin{psmallmatrix} 1 \\ 1 \end{psmallmatrix}, \begin{psmallmatrix} 1 \\ 1 \end{psmallmatrix}},
		&
		&x \in M_2.
	\end{align*}
	The norm distance of these states is given by $\norm{\varphi - \psi} = \sqrt2$.	Since $\varphi$ and $\psi'$ are pure, their only coupling is $\hat\varphi = \varphi \tensor \psi'$. For $\hat\varphi(p_\Delta) \le \nicefrac14$ the \rhs of inequality~\eqref{eq:refined_c_ineq} yields a value greater or equal to 2, thus the inequality is trivial. For $\hat\varphi(p_\Delta) > \tfrac14$ the \rhs of the inequality is decreasing as a function of $\hat\varphi(p_\Delta)$. For the tightest bound we therefore have to maximize $\hat\varphi(p_\Delta)$ among all diagonal projections. In Proposition~\vref{prop:max_diag_proj} we will show that all maximal diagonal projections in $M_2 \tensor M_2'$ are of the form
	\begin{equation*}
		p_\Delta := (p \tensor JpJ) + (p^\perp \tensor J p^\perp J)
	\end{equation*}
	for some one-dimensional projection $p \in M_2$. For such a projection $p$ fix a unit vector in the corresponding subspace of the form $\xi = \begin{psmallmatrix} \sqrt r \\ e^{i\omega} \sqrt t \end{psmallmatrix}$ with $r,t \ge 0$, $r+t=1$, $\omega \in \R$. Then we obtain
	\begin{equation*}
		\hat \varphi(p_\Delta) = \tfrac12 \cdot r \cdot \abs{\sqrt r + e^{i\omega} \sqrt t}^2 + \tfrac12 \cdot t \cdot \abs{\sqrt r - e^{i\omega} \sqrt t}^2 = \tfrac12 + \Re(e^{i\omega}) (r-t) \sqrt{rt}.
	\end{equation*}
	The maximum $\varphi(p_\Delta) = \tfrac34$ is attained at $\omega = 0$, $r = \tfrac12 + \tfrac14 \sqrt2$, and $t = \tfrac12 - \tfrac14 \sqrt2$. Hence for every diagonal projection we have $\varphi(p_\Delta) \le \tfrac34$  and, consequently,
	\begin{equation*}
		\norm{\varphi -\psi} = \sqrt2 < 1+ \tfrac12 \sqrt{3} \le 2 \bigl( 1+\hat\varphi(p_\Delta)^{1/2} \bigr) \, \bigl( 1 - \hat\varphi(p_\Delta) \bigr)^{1/2}.
	\end{equation*}
\end{expl}

The maximal diagonal projections play a distinguished role: First, it is immediate from the definition that projections $q \in \M \tensor \M'$ that are dominated by a diagonal projection $p_\Delta \ge q$ are again diagonal. To determine all diagonal projections it hence suffices to characterize the maximal diagonal projections. Second, the maximal projections yield the best bounds in the coupling inequality of Theorem~\ref{thm:QCI}. 

\begin{expl}
	\begin{enumerate}
	\item 
		For the von Neumann algebra $\M = \ell^\infty(S)$ with a discrete set $S$ all diagonal projections $\chi_A \in \ell^\infty(S \times S)$ are characteristic functions of some subset of the diagonal $\Delta := \{(\omega, \omega) \;|\; s\in S\}$. In particular, there is a \emph{unique} maximal diagonal projection, namely $\Delta$ itself. 
	\item
		For a normal state $\varphi$ on $\BH$ denote by $\varphi_\Delta$ the normal extension of the diagonal state to $\BH \tensor \BH' = \B\bigl( \HS(\H)\bigr)$. The one-dimensional support projection $p_\Delta := \supp \varphi_\Delta$ then is a diagonal projection. These projection will play a distinguished role later (Theorem~\ref{thm:ac_diag}). For different normal states $\varphi$ and $\psi$ the two diagonal projections $p_\Delta := \supp \varphi_\Delta$ and $q_\Delta := \supp \psi_\Delta$ in general have no dominating diagonal projection in common. More precisely, it follows from the following proposition that there is a diagonal projection larger than $p_\Delta$ and $q_\Delta$ if and only if $\varphi$ and $\psi$ have commuting densities.
	\end{enumerate}
\end{expl}

\begin{prop}	\label{prop:max_diag_proj}
	For a projection $p_\Delta \in \BH \tensor \BH'$ the following statements are equivalent:
	\begin{equivalence}
	\item
		$p_\Delta$ is a maximal diagonal projection.
	\item
		There is an orthonormal basis $(e_i)_{i \in I}$ of $\H$ such that
		\begin{equation*}
			p_\Delta = \sum_{i \in I} p_i \tensor J p_i J, 
		\end{equation*}
		where $p_i$ denotes the projection onto $\C e_i$ and where sum converges in the $\sigma$-strong topology.
	\end{equivalence}
\end{prop}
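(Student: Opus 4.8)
The plan is to prove the implication (b) $\Rightarrow$ (a) first, since the construction is explicit, and then establish (a) $\Rightarrow$ (b) by showing that any diagonal projection has the structure forced by the diagonal condition, together with a maximality argument. Throughout I work in the standard picture $\BH \tensor \BH' = \B(\H \tensor \bar\H)$, so that $x \tensor \one$ acts as $x$ on the first leg and $\one \tensor JxJ$ acts (anti-linearly transported) on $\bar\H$; the diagonal condition $p_\Delta(x \tensor \one)p_\Delta = p_\Delta(\one \tensor Jx^*J)p_\Delta$ will translate into a statement about how $p_\Delta$ intertwines the two legs.

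For (b) $\Rightarrow$ (a): given an orthonormal basis $(e_i)$ with rank-one projections $p_i$, set $p_\Delta := \sum_i p_i \tensor Jp_iJ$. I would verify directly that this is a diagonal projection: since $(p_i \tensor Jp_iJ)(x \tensor \one)(p_j \tensor Jp_jJ) = \scal{xe_j,e_i}\, (p_i \tensor Jp_iJ)$ only picks up the $(i,j)$ matrix coefficient on the first leg, while $(p_i \tensor Jp_iJ)(\one \tensor Jx^*J)(p_j \tensor Jp_jJ)$ picks up the corresponding coefficient of $Jx^*J$ on the second leg, and because $Jp_iJ$ is exactly the rank-one projection onto $\bar e_i$, the two expressions agree coefficient by coefficient — indeed both equal $\scal{xe_j,e_i}\,(p_i \tensor Jp_iJ)$ after matching $J$-conjugation with complex conjugation of matrix entries. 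Here the key computation is checking $\scal{(\one\tensor Jx^*J)(e_j\tensor \bar e_j), e_i\tensor \bar e_i} = \overline{\scal{x^*e_i,e_j}} = \scal{xe_j,e_i}$, using $J(\xi\tensor\eta)=\eta\tensor\xi$ and linearity of the scalar product. Maximality then follows from the observation recorded before the proposition (sub-projections of diagonal projections are diagonal) combined with the fact that $\sum_i p_i \tensor Jp_iJ$ is a projection onto a subspace that cannot be enlarged within the diagonal class: any strictly larger projection $q \ge p_\Delta$ would have to satisfy the diagonal condition on a vector $\zeta \perp \sum_i \C(e_i\tensor\bar e_i)$, and I would derive a contradiction from the "rigidity" analysis of part (a) $\Rightarrow$ (b).

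For (a) $\Rightarrow$ (b), which I expect to be the main obstacle: let $p_\Delta$ be any diagonal projection and let $\mathfrak{L} \subseteq \H\tensor\bar\H$ be its range. The diagonal condition says that for every $x\in\BH$ and all $\zeta,\zeta'\in\mathfrak{L}$ one has $\scal{(x\tensor\one)\zeta,\zeta'} = \scal{(\one\tensor Jx^*J)\zeta,\zeta'}$. Identifying $\zeta\in\H\tensor\bar\H$ with a Hilbert–Schmidt operator $T_\zeta$ on $\H$, the left side is $\scal{xT_\zeta, T_{\zeta'}}_{\HS} = \Tr(T_{\zeta'}^* x T_\zeta)$ and the right side is $\scal{T_\zeta x, T_{\zeta'}}_{\HS} = \Tr(x T_\zeta T_{\zeta'}^*)$ (up to the standard identifications of the $J$-action with right multiplication). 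Hence $\Tr\big(x(T_\zeta T_{\zeta'}^* - T_{\zeta'}^* x^{-?}\,)\big)$ — more precisely $\Tr(x(T_\zeta T_{\zeta'}^* - T_{\zeta'}^* T_\zeta)) = 0$ for all $x$, so the range must consist of Hilbert–Schmidt operators that are "normal relative to each other": $T_\zeta T_{\zeta'}^* = T_{\zeta'}^* T_\zeta$ for all $\zeta,\zeta'\in\mathfrak{L}$. Taking $\zeta=\zeta'$ forces every $T_\zeta$ to be a normal operator; the relation for general pairs forces the family $\{T_\zeta : \zeta \in \mathfrak L\}$ to generate a commutative $*$-algebra of normal operators, hence to be simultaneously diagonalizable. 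One then picks an orthonormal basis $(e_i)$ diagonalizing all $T_\zeta$; writing $T_\zeta = \sum_i t_i(\zeta)\, p_i$ shows $\mathfrak{L} \subseteq \overline{\lin}\{e_i\tensor\bar e_i\} = \operatorname{range}\big(\sum_i p_i\tensor Jp_iJ\big) =: p_\Delta^{\max}$. Thus $p_\Delta \le p_\Delta^{\max}$, and maximality of $p_\Delta$ gives $p_\Delta = p_\Delta^{\max}$, which is exactly form (b). The delicate points I would need to handle carefully are: (i) passing from "$T_\zeta$ normal for each $\zeta$ and $T_\zeta T_{\zeta'}^* = T_{\zeta'}^* T_\zeta$" to genuine \emph{simultaneous} diagonalizability — this needs the polarization-type argument that $T_\zeta$ and $T_{\zeta'}$ commute and that adjoints also lie in the commutant, so that one really gets an abelian von Neumann algebra of normal operators — and (ii) the $\sigma$-strong convergence claim and the measure-theoretic care when $\H$ is infinite-dimensional, where "simultaneously diagonalizable" must be read in the spectral-multiplicity sense and one must check that the diagonalizing basis can be chosen so that each $p_i$ is genuinely rank one (using that the whole algebra generated is maximal abelian, forced again by maximality of $p_\Delta$).
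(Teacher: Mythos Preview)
Your approach is essentially the paper's: work in the Hilbert--Schmidt picture, show that the range of a diagonal projection consists of normal operators satisfying $T_\zeta T_{\zeta'}^* = T_{\zeta'}^* T_\zeta$, upgrade this to full commutation (the paper invokes \cite[Ex.~IX.2.18]{Conway}, which is Fuglede's theorem), simultaneously diagonalize, and then use maximality to conclude $p_\Delta = \sum_i p_i \tensor Jp_iJ$; the only cosmetic difference is that the paper derives the pairwise relation by first noting that each $\rho_1 + \alpha\rho_2$ is normal and polarizing, whereas you read it off directly from the two-vector form of the diagonal condition. Your worry~(ii) is unnecessary: Hilbert--Schmidt operators are compact, and any commuting family of compact normal operators admits a common orthonormal basis of eigenvectors by a straightforward Zorn argument, so no spectral-multiplicity issues arise and maximal-abelianness of the generated algebra is not needed.
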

\begin{proof}
	We assume that $\BH$ is given in its standard representation acting via left multiplication on the Hilbert space $L^2(\H)$ of all Hilbert-Schmidt operators on $\H$. 
	%Furthermore, recall that $\BH \tensor \BH'= \B\bigl( L^2(\H) \bigr)$. 	
	\begin{enumerate}
	\item	\label{en:max_diag_proj_vect}
		As a first step we show that a one-dimensional projection onto the subspace $\C\rho$, $\rho \in \HS(\H)$, is diagonal if and only if $\rho$ is a normal Hilbert-Schmidt operator:
		Take an arbitrary unit vector $\rho \in L^2(\H)$ and denote by $p_\rho \in \B \bigl( L^2(\H) \bigr)$ the orthogonal projection onto the subspace $\C \rho \subseteq L^2(\H)$. Since $p_\rho \cdot z \cdot p_\rho = \scal{z \rho, \rho} \cdot p_\rho$ for every $z \in \B\bigl(\HS(\H) \bigr)$, it follows that $p_\rho$ is a diagonal projection if and only if the equation
		\begin{equation}
			\label{eq:max_diag_proj_1}
			\scal{(x \tensor \one) \rho, \rho} = \scal{(\one \tensor Jx^*J) \rho, \rho}
		\end{equation}
		holds for every $x \in \BH$. Computing the left and right hand side, we obtain
		\begin{align*}
			\scal{(x \tensor \one) \rho, \rho} 
			&= \Tr(\rho^* x \rho) 
			= \Tr(\rho \rho^* x),
			&
			\scal{(\one \tensor Jx^* J)\rho, \rho}
			&= \Tr(\rho^* \rho x)
		\end{align*}
		for every $x \in \BH$. Now, if $\rho^*\rho = \rho \rho^*$ then both sides of \eqref{eq:max_diag_proj_1} obviously coincide, that is, $p_\rho$ is a diagonal projection. Conversely, since the functionals $\Tr(\cdot \; x)$ with $x \in \BH$ are separating for $\BH$, Equation~\ref{eq:max_diag_proj_1} implies $\rho^* \rho = \rho \rho^*$.
	\item	\label{en:max_diag_proj_commute}
		As a second step we show that for a diagonal projection $p_\Delta$ the corresponding subspace $p_\Delta \HS(\H)$ contains mutually commuting Hilbert-Schmidt operators:
		Let $\rho_1, \rho_2 \in p_\Delta L^2(\H)$ be two elements of $p_\Delta \HS(\H)$. For each $\alpha \in \C$ the element $\rho_1 + \alpha \rho_2$ is in this subspace, too. Since the projection onto $\C (\rho_1 + \alpha \rho_2)$ is dominated by $p_\Delta$, it is also a diagonal projection. From part~\ref{en:max_diag_proj_vect} of the proof we know that $\rho_1 + \alpha \rho_2$ is a normal operator for all $\alpha \in \C$. By a polarization argument it follows that $\rho_1 \rho_2^* = \rho_2^* \rho_1$ and hence $\rho_1 \rho_2 = \rho_2 \rho_1$ (\cf~\cite[Ex.~IX.2.18]{Conway}).
	\item
		Finally we prove the assertion:
		Let $p_\Delta \in \B \bigl( L^2(\H) \bigr)$ be a maximal diagonal projection. By part~\ref{en:max_diag_proj_commute} of the proof the corresponding subspace $p_\Delta L^2(\H)$ contains mutually commuting normal Hilbert-Schmidt operators. Consequently, they are simultaneously diagonalizable, that is, there is an orthonormal basis $(e_i)_{i \in I}$ of $\H$ such that for each $\rho \in p_\Delta L^2(\H)$ all $e_i$, $i \in I$, are eigenvectors of $\rho$. Put
		\begin{equation*}
			q_\Delta := \sum_{i \in I} p_i \tensor Jp_iJ \in \B \bigl( L^2(\H) \bigr),
		\end{equation*}
		where $p_i$ denotes the projection onto $\C e_i$. It is straightforward to check that this is indeed a diagonal projection.  By construction we have $q_\Delta \ge p_\Delta$ and hence $p_\Delta = q_\Delta$ by maximality.
		\qedhere
	\end{enumerate}
\end{proof}

\section{The Diagonal Coupling of a Tensor Dilation}
\label{sec:diagonal_c}

From states we now shift out attention to unital completely positive normal maps, briefly called \emph{Markov operators}. Our goals is to apply the Quantum Coupling Inequality to the (in general irreversible) dynamical system given such a map. For this purpose we study not only couplings of states, but also couplings of Markov operators in the following sense:

\begin{defn}
	Let $S:\M \to \M$ and $T:\alg N\to \alg N$ be two Markov operators on von Neumann algebras $\M$ and $\alg N$, respectively. A~\emph{coupling} of $S$ and $T$ is a Markov operator $\hat T:\M \tensor \alg N \to \M \tensor \alg N$ such that
	\begin{align*}
		\hat T(x \tensor \one) &= S(x) \tensor \one,
		&
		\hat T(\one \tensor y) &= \one \tensor T(y)
	\end{align*}
	holds for every $x \in \M$ and $y \in \alg N$.
\end{defn}
As previously for states, the set of all couplings of two fixed Markov operators clearly is a convex set containing the tensor product map $S \tensor T$. Moreover, $S \tensor T$ is the only coupling if and only if $S$ or $T$ is an extremal Markov operator (see \cite{Haapasalo-etal}). In contrast to couplings of states not every Markov operator $\hat T:\M \tensor  \alg N \to \alg M \tensor \alg N$ is a coupling; it has to leave the algebras $\M \tensor \one$ and $\one \tensor \alg N$ invariant. 

\begin{expl}	\label{ex:M_c_convex}
	Suppose $S$ and $T$ admit convex decompositions $S = \sum_{k=1}^N \lambda_k S_k$ and \mbox{$T = \sum_{k=1}^N \lambda_k T_k$} with Markov operators $S_k$ and $T_k$, and with the same coefficients \mbox{$0 \le \lambda_k \le 1$}, $\sum_{k=1}^N \lambda_k=1$. Then 
	\begin{equation*}
		\hat T := \sum_{k=1}^N \lambda_k \cdot (S_k \tensor T_k)
	\end{equation*}
	is a coupling of $S$ and $T$. If one of the algebras $\M$ or $\alg N$ is commutative and finite dimensional, then every coupling of $S$ and $T$ is of this form. 
\end{expl}

In physics many examples of Markov operators arise from an interaction of the system with a random environment. Mathematically speaking, the Markov operator is given by a dilation. For this paper we restrict ourselves to the investigation of tensor dilations with atomic algebras. In the following we show how such a dilation canonically gives rise to a coupling called the \emph{diagonal coupling}.

Let $T:\M \to \M$ be a Markov operator on a von Neumann algebra~$\M$. Let $\alg C$ be a purely atomic von Neumann algebra and let $\psi$ be a faithful normal state on $\alg C$. Denote by $\psi_\Delta$ the normal extension of the diagonal state of~$\psi$. Furthermore, let \mbox{$\Gamma:\alg M \to \M \tensor \alg C$} be an injective unital normal \Star homomorphism such that
\begin{equation*}
	T = (\Id \tensor \psi) \circ \Gamma,
\end{equation*}
where $\Id \tensor \psi$ denotes the normal linear extension of $x \tensor c \mapsto \psi(c) \cdot x$ for all $x \in \M$ and $c \in \alg C$. 
We denote by $T':\M' \to \M'$ and $\Gamma':\M' \to \M' \tensor \alg C'$ the opposite maps of $T$ and~$\Gamma$, respectively,  and define an injective unital normal \Star homomorphism by
\begin{equation*}
	\hat\Gamma:\M \tensor \M' \longrightarrow (\M \tensor \M') \tensor  (\alg C \tensor \alg C'), 
	\qquad
	\hat\Gamma := \sigma \circ (\Gamma \tensor \Gamma'),
\end{equation*}
where $\sigma:\M \tensor \alg C \tensor  \alg M' \tensor  \alg  C' \to \M \tensor \M' \tensor \alg C \tensor \alg C'$ denotes the tensor flip of the inner pair of tensor factors. Then a straightforward computation yields the following: 

\begin{prop}	\label{prop:tensor_coupling}
	Let $\hat \psi$ be a coupling of $\psi$ and $\psi'$. Then the Markov operator
	\begin{equation*}
		\hat T:\M \tensor \M' \to \M \tensor \M',
		\qquad
		\hat T := (\Id \tensor \hat \psi) \circ \hat \Gamma
	\end{equation*}
	is a coupling of $T$ and $T'$.	
\end{prop}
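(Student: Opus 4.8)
The plan is to verify the two conditions defining a coupling of Markov operators: that $\hat T$ is itself a Markov operator, and that it restricts to $x \tensor \one \mapsto T(x) \tensor \one$ on the embedded copy of $\M$ and to $\one \tensor y' \mapsto \one \tensor T'(y')$ on the embedded copy of $\M'$. Both parts amount to bookkeeping with tensor legs, the flip $\sigma$, and the two marginal identities $\hat\psi(c \tensor \one) = \psi(c)$ and $\hat\psi(\one \tensor d') = \psi'(d')$ for $c \in \alg C$, $d' \in \alg C'$.

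For the first part I would note that $\hat\Gamma = \sigma \circ (\Gamma \tensor \Gamma')$ is a composition of unital normal $^*$-homomorphisms: $\Gamma$ is one by hypothesis; its opposite $\Gamma'$ is one because conjugation by the modular conjugation $J_{\M \tensor \alg C}$ is a multiplicative, $^*$-preserving, anti-linear isomorphism onto the commutant; the tensor product of two unital normal $^*$-homomorphisms is again such; and $\sigma$ is a unital normal $^*$-isomorphism. The slice map $\Id_{\M \tensor \M'} \tensor \hat\psi$ is unital, completely positive and normal because $\hat\psi$ is a normal state. Hence $\hat T$, a composition of these, is a Markov operator.

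For the intertwining relations, since all maps in sight are normal it is enough to compute on elementary tensors. The crux is that for $z = a \tensor c \in \M \atensor \alg C$, using unitality of $\Gamma'$ and the first marginal identity,
\[
	(\Id_{\M \tensor \M'} \tensor \hat\psi)\bigl( \sigma\bigl( z \tensor \one_{\M' \tensor \alg C'} \bigr) \bigr)
	= (\Id_{\M \tensor \M'} \tensor \hat\psi)\bigl( a \tensor \one_{\M'} \tensor c \tensor \one_{\alg C'} \bigr)
	= \psi(c)\, a \tensor \one_{\M'} ,
\]
so by normality $(\Id \tensor \hat\psi) \circ \sigma\, (\,\cdot \tensor \one\,) = (\Id_\M \tensor \psi)(\,\cdot\,) \tensor \one_{\M'}$ on all of $\M \tensor \alg C$. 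Feeding in $z = \Gamma(x)$ together with $(\Gamma \tensor \Gamma')(x \tensor \one) = \Gamma(x) \tensor \one_{\M'\tensor\alg C'}$ then gives $\hat T(x \tensor \one) = (\Id_\M \tensor \psi)(\Gamma(x)) \tensor \one = T(x) \tensor \one$. The computation for $\one \tensor y'$ is symmetric, using unitality of $\Gamma$, the identity $(\Gamma \tensor \Gamma')(\one \tensor y') = \one_{\M\tensor\alg C} \tensor \Gamma'(y')$, and the second marginal identity; it produces $\hat T(\one \tensor y') = \one_\M \tensor (\Id_{\M'} \tensor \psi')(\Gamma'(y'))$.

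Finally I would identify $(\Id_{\M'} \tensor \psi') \circ \Gamma'$ with $T'$: forming opposite maps commutes with composition, and $(\Id_\M \tensor \psi)' = \Id_{\M'} \tensor \psi'$ since the modular conjugation of $\M \tensor \alg C$ factors as $J_\M \tensor J_{\alg C}$ and $\psi, \psi'$ share the GNS vector $\xi_\psi$; applying $(\,\cdot\,)'$ to $T = (\Id \tensor \psi) \circ \Gamma$ then yields $T' = (\Id_{\M'} \tensor \psi') \circ \Gamma'$, which closes the argument. I expect the only real obstacle to be keeping the tensor-factor orderings through $\sigma$ and the conjugations implicit in the opposite maps straight; there is nothing deep here, which is presumably why the authors record it as a straightforward computation.
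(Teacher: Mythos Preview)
Your proposal is correct and is precisely the ``straightforward computation'' the paper alludes to without writing out; the authors state the proposition immediately after the sentence ``Then a straightforward computation yields the following'' and give no further argument. Your bookkeeping of the tensor legs, the marginal identities for $\hat\psi$, and the verification that $(\Id_{\M'}\tensor\psi')\circ\Gamma' = T'$ via functoriality of $(\,\cdot\,)'$ are exactly what is needed.
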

%\begin{proof}
%	We have to show $\hat T(x \tensor \one) = T(x) \tensor \one$ and $\hat T(\one \tensor x') = \one \tensor T'(x')$ for all $x \in \M$ and $x' \in \alg M'$. Since $\hat \psi$ is a coupling of $\psi$ and $\psi'$, we compute
%	\begin{align*}
%		\MoveEqLeft[5]
%		\bigl( (\Id_{\M \tensor \M'} \tensor \hat \psi) \circ  \sigma \bigr)(x \tensor c \tensor \one_{\M' \tensor \alg C'}) 
%		= \hat\psi(c \tensor \one_{\alg C'}) \cdot x \tensor \one_{\M'}
%		\\
%		&= \psi(c) x \tensor \one_{\M'} 
%		= (\Id \tensor \psi)(x \tensor c) \tensor \one_{\M'} 
%	\end{align*}
%	for every $x \in \M$ and $c \in \alg C$. By normal linear extension we hence obtain
%	\begin{align*}
%		\hat T(x \tensor \one_{\M'}) 
%		&= \bigl( (\Id_{\M \tensor \M'} \tensor \hat \psi) \circ \sigma  \bigr)\bigl( \Gamma(x) \tensor \one_{\M' \tensor \alg C'} \bigr)
%		\\
%		&=  (\Id \tensor \psi)\bigl( \Gamma(x) \bigr)  \tensor \one_{\M'} 
%		= T(x) \tensor \one_{\M'} 
%	\end{align*}
%	for every $x \in \M$. An analogue computation on the commutants yields $\hat T(\one_\M \tensor x') = \one_\M \tensor T'(x')$ for every $x' \in \M'$.
%\end{proof}

\begin{defn}
	The Markov operator obtained for the particular choice $\hat\psi := \psi_\Delta$ in Proposition~\ref{prop:tensor_coupling},
	\begin{equation*}
		\hat T_\Delta:\M \tensor \M' \to \M \tensor \M', 
		\qquad
		\hat T_\Delta := (\Id \tensor \psi_\Delta) \circ \hat \Gamma,
	\end{equation*}
	is called the \emph{diagonal coupling} of $T$ associated with $\Gamma$ and~$\psi$.
\end{defn}

\begin{rmk}
	\label{rmk:alg_diag_c}
	If the algebra $\alg C$ is not purely atomic, the same construction still provides us with the algebraic variant of the diagonal coupling $\hat T_\Delta:\M \atensor \M' \to \M \atensor \M'$
\end{rmk}

\section{Convergence of the Diagonal Coupling}
\label{sec:convergence_diagonal_c}

For a moment suppose that $\M$ is purely atomic and that there is a faithful normal state $\varphi$ on $\M$ with $(\varphi \tensor \psi) \circ \Gamma = \varphi$ such that $\Gamma$ commutes with the modular automorphism groups. Then by Proposition~\vref{prop:diag_mod_grp} it follows that the normal extentension of the diagonal state $\varphi_\Delta$ is invariant for the diagonal coupling, \ie,
\begin{equation*}
	\varphi_\Delta \circ \hat T_\Delta = \varphi_\Delta.
\end{equation*}
It follows that the support projection $p_\Delta := \supp \varphi_\Delta$ satisfies $\hat T_\Delta(p_\Delta) \ge p_\Delta$. Thus, the question arises under which conditions the increasing sequence $\hat T_\Delta^n(p_\Delta)$ converges to $\one$.  
It turns out that this question is related to the scattering theory of the dilation established in \cite{Kuemmerer-Maassen00}. To make this connection precise we first briefly recall some fundamentals of this theory: 

Let $\M$ and $\alg C$ be arbitrary von Neumann algebras, let $\Gamma:\M \to \M \tensor \alg C$ an injective unital normal \Star homomorphism, and let $\psi$ be a faithful normal state on $\alg C$. 
We denote by $v:L^2(\M) \to L^2(\M) \tensor L^2(\alg C)$ the unique isometry with
\begin{equation*}
	v\, x\xi_\varphi  = \Gamma(x) (\xi_\varphi \tensor \xi_\psi)
\end{equation*}
for every $x \in \M$.
Let $\alg C^+ := \bigtensor_{n=1}^\infty (\alg C, \psi)$ be the infinite tensor product von Neumann algebra along the product states $\psi^{\tensor n}$ on $\alg C^{\tensor n}$, and write $\psi^+ := \bigtensor_{n=1}^\infty \psi$ for the normal product state on $\alg C^+$ (\cf \cite[Sec.~2.7.2]{Bratteli-Robinson}). Then $\Gamma:\M \to \M \tensor \alg C$ extends to an injective unital normal \Star homomorphism $\alpha:\M \tensor \alg C^+ \to \M \tensor \alg C^+$ with
\begin{equation*}
	\alpha(x \tensor c) := \Gamma(x) \tensor c
\end{equation*}
for every $x \in \M$ and $c \in \alg C^+$. For sake of brevity we put $\varphi^+ := \varphi \tensor \psi^+$ and write \mbox{$\norm{z}_{\varphi^+}:= (\varphi \tensor \psi^+)(z^*z)^{1/2}$} for the norm on $\M \tensor \alg C^+$ induced by $\varphi^+$. 

\begin{defn}
	The \emph{extended dual transition operator} is the Markov operator 
	\begin{equation*}
		Z':\B \bigl(L^2(\M) \bigr) \to \B \bigl( L^2(\M) \bigr), 
		\quad 
		Z'(x) := v^* (x \tensor \one) v.
	\end{equation*}
\end{defn}

The extended dual transition operator was introduced and studied in \cite{Gohm04a} to classify different tensor dilations of $T$. Furthermore, in \cite{Gohm-etal} the authors showed that this operator allows to give a simple characterization of asymptotic completeness of the dilation as follows:

\pagebreak[3]
\begin{thm}[\cf \cite{Gohm-etal, Kuemmerer-Maassen00}]	\label{thm:asymp_compl}
%\begin{prop}[\cf \cite[Thm.~4.3]{Gohm-etal}]	\label{prop:asymp_compl}
	The following statements are equivalent:
	\begin{equivalence}
	\item	\label{en:asymp_compl_closure}
		The $\sigma$-weak closure of the set $\bigcup_{n \in \N} \alpha^{-n}(\one \tensor \alg C^+)$ contains $\M \tensor \one$.
	\item	\label{en:asymp_compl_norm}
		For every $x \in \M$
		\begin{equation*}
			\lim_{n \to \infty} \norm[\big]{\alpha^n(x \tensor \one) - (\varphi \tensor \Id) \bigl( \alpha^n(x \tensor \one) \bigr)}_{\varphi^+} = 0.
		\end{equation*}
	\item	\label{en:asymp_compl_Z}
		The fixed space of $Z'$ is $\C \one$.
	\end{equivalence}
\end{thm}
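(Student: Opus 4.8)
The plan is to prove the cycle of implications $(a)\Rightarrow(b)\Rightarrow(c)\Rightarrow(a)$, working throughout in the GNS-Hilbert space $L^2(\M \tensor \alg C^+) = L^2(\M) \tensor L^2(\alg C^+)$, on which $\alpha$ is implemented by the isometry $\hat U$ determined by $\hat U\, z\xi_{\varphi^+} = \alpha(z)\xi_{\varphi^+}$. First I would fix the auxiliary data. Because $\alpha(\one \tensor \alg C^+) \subseteq \one \tensor \alg C^+$, the algebras $\alg A_n := \alpha^{-n}(\one \tensor \alg C^+)$ form an increasing tower with $\sigma$-weak closure $\alg A_\infty := \bigl(\bigcup_n \alg A_n\bigr)''$, and statement (a) is precisely $\M \tensor \one \subseteq \alg A_\infty$. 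Since $\varphi^+ = \varphi \tensor \psi^+$ is $\alpha$-invariant, the endomorphism $\alpha$ commutes with the modular group $\sigma^{\varphi^+}$, so each $\alg A_n$ is globally $\sigma^{\varphi^+}$-invariant and by Takesaki's theorem carries a $\varphi^+$-preserving normal conditional expectation $E_n \colon \M \tensor \alg C^+ \to \alg A_n$, with $E_n \uparrow E_\infty$ $\sigma$-strongly. In $L^2$ these become orthogonal projections $\hat E_n \uparrow \hat E_\infty$, where $\hat E_0$ projects onto $\xi_\varphi \tensor L^2(\alg C^+)$ and $\hat E_0 (z\xi_{\varphi^+}) = (\varphi \tensor \Id)(z)\,\xi_{\varphi^+}$. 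Writing $\eta_x := (x \tensor \one)\xi_{\varphi^+}$ and noting $\alpha^n(x\tensor\one) = \Gamma^{(n)}(x)$ with $\Gamma^{(n)}$ the $n$-fold iterate of $\Gamma$, condition (b) reads exactly $\norm{(\one-\hat E_0)\hat U^n\eta_x} \to 0$ for every $x \in \M$.

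The implication $(a)\Rightarrow(b)$ is then direct: given $x$ and $\varepsilon > 0$, Kaplansky density applied to $\M\tensor\one\subseteq\alg A_\infty$ yields an index $n_0$ and some $z \in \alg A_{n_0}$ with $\norm{\eta_x - z\xi_{\varphi^+}} < \varepsilon$; for $n \ge n_0$ one has $z\in\alg A_n$, hence $\hat U^n z\xi_{\varphi^+} = \alpha^n(z)\xi_{\varphi^+} \in \mathrm{ran}\,\hat E_0$, and as $\hat U$ is isometric, $\norm{(\one-\hat E_0)\hat U^n\eta_x} = \norm{(\one-\hat E_0)\hat U^n(\eta_x - z\xi_{\varphi^+})} \le \norm{\eta_x - z\xi_{\varphi^+}} < \varepsilon$.

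For $(b)\Leftrightarrow(c)$ I would unfold the iterated structure of $Z'$. Let $\hat V_n\colon L^2(\M) \to L^2(\M) \tensor L^2(\alg C)^{\tensor n}$ be the iterated isometry fixed by $\hat V_n\, x\xi_\varphi = \Gamma^{(n)}(x)(\xi_\varphi \tensor \xi_\psi^{\tensor n})$, so that $Z'^n(y) = \hat V_n^*\,(y \tensor \one_{\alg C^{\tensor n}})\,\hat V_n$; under the natural embedding $L^2(\M)\tensor L^2(\alg C)^{\tensor n} \hookrightarrow L^2(\M \tensor \alg C^+)$ one has $\hat U^n\eta_x = (\hat V_n x\xi_\varphi)\tensor\xi_\psi^{\tensor\infty}$ and $\hat E_0 = q \tensor \one$, where $q$ denotes the rank-one projection onto $\C\xi_\varphi$. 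A short computation then identifies (b) with $\scal{Z'^n(q)\,\xi,\xi} \to \norm\xi^2$ for all $\xi\in L^2(\M)$, that is, with $Z'^n(q)\to\one$ strongly. Now $q$ is the support of the $Z'$-invariant vector state $\omega := \scal{\,\cdot\,\xi_\varphi,\xi_\varphi}$, which forces $q\le Z'(q)$; passing to the predual map $Z'_*$ on trace-class operators, for which $q$ is an invariant rank-one density, a trace-norm Cauchy--Schwarz estimate turns $Z'^n(q)\to\one$ into $Z'^n_*(\rho)\to q$ for every state $\rho$. Consequently every $y\in\mathrm{Fix}(Z')$ satisfies $\Tr(\rho y) = \Tr(Z'^n_*(\rho)\,y) \to \omega(y)$ for all $\rho$, forcing $y = \omega(y)\one$; thus $\mathrm{Fix}(Z') = \C\one$, which is (c). The same dictionary gives $(c)\Rightarrow(b)$: from $q\le Z'(q)$ we get $Z'^n(q)\uparrow r$ with $r$ a $Z'$-fixed positive contraction dominating $q$, and $\mathrm{Fix}(Z') = \C\one$ together with $q\le r\le\one$ forces $r=\one$.

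The step I expect to be the real obstacle is $(c)\Rightarrow(a)$ — equivalently, the reverse passage from the Hilbert-space convergence (b) back to the algebraic statement (a). The naive estimate only yields the inequality $\norm{(\one-\hat E_n)\eta_x} \ge \norm{(\one-\hat E_0)\hat U^n\eta_x}$ (via $\hat U^{*n}\hat E_0\hat U^n \ge \hat E_n$), which runs the wrong way. To reverse it one needs that the conditional expectation onto $\one\tensor\alg C^+$ and the one onto $\alpha^n(\M\tensor\alg C^+)$ form a \emph{commuting square}; this is precisely the Markov property built into a tensor dilation, and it is here that the scattering theory of K\"ummerer--Maassen~\cite{Kuemmerer-Maassen00} (and its reformulation in~\cite{Gohm-etal}) enters. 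Granting that input, $\hat E_0$ and $\hat U^n\hat U^{*n}$ commute, one obtains $\hat E_n = \hat U^{*n}\hat E_0\hat U^n$ and hence $\norm{(\one-\hat E_n)\eta_x} = \norm{(\one-\hat E_0)\hat U^n\eta_x}$, so that the characterizations of (a) and (b) literally coincide and the cycle closes. The genuinely dynamical ingredients — the monotonicity $q\le Z'(q)$ and the mixing of $Z'_*$ towards the rank-one stationary density — are then short, and the remaining work is bookkeeping with the isometries $\hat U$, $\hat V_n$ and the conditional expectations $E_n$.
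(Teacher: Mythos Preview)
The paper does not prove this theorem in place: the remark following it records that $(b)\Leftrightarrow(c)$ is literally \cite[Thm.\,4.3]{Gohm-etal}, while the equivalence with~$(a)$ is taken from \cite{Kuemmerer-Maassen00} in the reversible case, the irreversible adaptation being left to the reader. Your sketch is therefore more detailed than anything the paper itself supplies, and on the substantive points it lines up with what those references do. In particular, your $(a)\Rightarrow(b)$ via Kaplansky density and your $(b)\Leftrightarrow(c)$ via the identification of~$(b)$ with the strong convergence $Z'^n(q)\to\one$ (using $q\le Z'(q)$ from invariance of the vector state at $\xi_\varphi$, together with the rank-one absorption argument on the predual) are correct and are precisely the mechanism of \cite{Gohm-etal}. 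You also correctly isolate $(b)\Rightarrow(a)$ as the step where the Markov structure of the tensor dilation is genuinely needed.

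There is one slip in your preamble. From ``$\varphi^+$ is $\alpha$-invariant'' you infer that $\alpha$ commutes with the modular group $\sigma^{\varphi^+}$, and then invoke Takesaki to obtain conditional expectations $E_n$ onto the~$\alg A_n$. State-invariance of an endomorphism does \emph{not} imply that it intertwines the modular automorphism groups; that is a genuinely stronger hypothesis (it is assumed explicitly in Theorem~\ref{thm:diag_vs_Z}, but not here). This does not touch your $(a)\Rightarrow(b)$ or $(b)\Leftrightarrow(c)$, which use only $E_0=\varphi\tensor\Id$ and the rank-one projection $q$, both available unconditionally. It does, however, undercut your route to $(b)\Rightarrow(a)$ through $\hat E_n$ and $E_\infty$. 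Since you are in any case deferring that step to the commuting-square input of \cite{Kuemmerer-Maassen00}, this is more a matter of tidying the setup than a gap in the strategy; the ``easy adaptation'' the paper alludes to is most naturally carried out by passing to the two-sided tensor product over $\alg C^{\Z}$, where $\alpha$ extends to an automorphism, the reversible argument of \cite{Kuemmerer-Maassen00} applies verbatim (the bounded approximants $\alpha^{-n}(c_n)\in\alg A_n$ exist and converge $\sigma$-strongly to $x\tensor\one$), and one then restricts back.
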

\pagebreak[3]
\begin{defn}
	The homomorphism $\Gamma$ is called \emph{asymptotically complete} if one of the equivalent conditions in Proposition~\ref{thm:asymp_compl} is met.
\end{defn}
\begin{rmk}[to the proof of Proposition~\ref{thm:asymp_compl}]
	The equivalence of the conditions \ref{en:asymp_compl_norm} and \ref{en:asymp_compl_Z} was literally proven in \cite[Thm.\,4.3]{Gohm-etal}. The equivalence to condition~\ref{en:asymp_compl_closure} was shown in \cite{Kuemmerer-Maassen00} for reversible dilations, \ie, if $\Gamma$ is of the form $\Gamma(x) = \beta(x \tensor \one)$ for some normal \Star automorphism $\beta$ of $\M \tensor \alg C$. The reader may easily adopt the proof without requiring reversibility. 
\end{rmk}

As a next step we want to show that the extended transition operator and the diagonal coupling are in duality to each other, which allows us to answer the initial question of this section. To describe this duality, we fix a number~$0 \le \alpha \le \nicefrac12$. We denote by $\Delta_\varphi$ the modular operator associated with the state $\varphi$ and define an injective linear map
\begin{equation*}
	i_\alpha:\M \to L^2(\M), \quad i_\alpha(x) := \Delta_\varphi^\alpha x \xi_\varphi.
\end{equation*}
Via this map we may identify $\M$ with a dense subset of $L^2(\M)$. For each $x \in \M$ and $y' \in \M'$ we obtain a linear functional $\Phi_\alpha(x \tensor y')$ on $\B \bigl( L^2(\M) \bigr)$ by putting
\begin{equation}
	\label{eq:Phi-map}
	\Phi_\alpha(x \tensor y')(t) := \scal{t \,i_\alpha(x), \; i_{\frac12 - \alpha}(Jy'J)}
\end{equation}
for every $t \in \B \bigl( L^2(\M) \bigr)$. The linear extension of Equation~\eqref{eq:Phi-map} then gives rise to an injective linear map 
\begin{equation*}
	\Phi_\alpha:\M \atensor \M' \to \B \bigl( L^2(\M) \bigr)_*\;.
\end{equation*}

\begin{thm}
	\label{thm:diag_vs_Z}
	Suppose that $\Gamma$ commutes with the modular automorphism groups of $\varphi$ and $\psi$, \ie, \mbox{$(\sigma_t^\varphi \tensor \sigma_t^\psi) \circ \Gamma = \Gamma \circ \sigma_t^\varphi$} every $t\in \R$. 
	Denote by $\hat T_\Delta:\M \atensor \M' \to \M \atensor \M'$ the diagonal coupling (\cf Remark~\ref{rmk:alg_diag_c}) and by $Z':\B \bigl( L^2(\M) \bigr) \to \B \bigl( L^2(\M) \bigr)$ the extended dual transition operator. Then for every $z \in \M \atensor \M$ we have
	\begin{equation*}
		\Phi_\alpha \bigl( \hat T_\Delta(z) \bigr) = \Phi_\alpha(z) \circ Z' .
	\end{equation*}
\end{thm}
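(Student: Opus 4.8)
The plan is to verify the identity $\Phi_\alpha(\hat T_\Delta(z)) = \Phi_\alpha(z) \circ Z'$ on elementary tensors $z = x \tensor y'$ with $x \in \M$, $y' \in \M'$, since both sides are linear in $z$ and $\Phi_\alpha$ is defined by linear extension. Unwinding the definition of $\hat T_\Delta = (\Id \tensor \psi_\Delta) \circ \hat\Gamma$ with $\hat\Gamma = \sigma \circ (\Gamma \tensor \Gamma')$, I would first obtain an explicit expression for $\hat T_\Delta(x \tensor y')$ as an element of $\M \atensor \M'$ in terms of the components of $\Gamma(x) \in \M \tensor \alg C$ and $\Gamma'(y') \in \M' \tensor \alg C'$; writing $\Gamma(x) = \sum_k x_k \tensor c_k$ and $\Gamma'(y') = \sum_l y_l' \tensor c_l'$ one gets $\hat T_\Delta(x \tensor y') = \sum_{k,l} \psi_\Delta(c_k \tensor c_l')\,(x_k \tensor y_l')$. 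Then I would feed this into the definition \eqref{eq:Phi-map} of $\Phi_\alpha$, so the left-hand side becomes $\sum_{k,l}\psi_\Delta(c_k \tensor c_l')\,\scal{t\, i_\alpha(x_k),\, i_{\frac12-\alpha}(Jy_l'J)}$ evaluated at $t \in \B(L^2(\M))$.

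The key computational device is the isometry $v:L^2(\M) \to L^2(\M)\tensor L^2(\alg C)$ with $v\,x\xi_\varphi = \Gamma(x)(\xi_\varphi \tensor \xi_\psi)$, which by definition satisfies $Z'(t) = v^*(t \tensor \one)v$. So the right-hand side $\Phi_\alpha(x \tensor y')(Z'(t)) = \scal{v^*(t\tensor\one)v\, i_\alpha(x),\, i_{\frac12-\alpha}(Jy'J)} = \scal{(t\tensor\one)\, v\,i_\alpha(x),\, v\, i_{\frac12-\alpha}(Jy'J)}$. The crux is therefore to show that $v$ intertwines the maps $i_\alpha$ correctly, namely that $v\, i_\alpha(x) = (\text{appropriate vector built from }\Gamma(x)\text{ and }\xi_\psi)$ and likewise for $i_{\frac12-\alpha}(Jy'J)$ with $\Gamma'$; this is where the hypothesis that $\Gamma$ commutes with the modular automorphism groups enters, since $i_\alpha(x) = \Delta_\varphi^\alpha x\xi_\varphi$ involves modular data and one needs $v \Delta_\varphi^\alpha = (\Delta_\varphi \tensor \Delta_\psi)^\alpha v$ (equivalently $v$ commutes with the relevant complex powers of the modular operator on the subspace where it is defined). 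Once $v\,i_\alpha(x)$ and $v\,i_{\frac12-\alpha}(Jy'J)$ are identified as vectors in $L^2(\M)\tensor L^2(\alg C)$ expressed through the components $x_k, c_k$ and $y_l', c_l'$, taking the inner product $\scal{(t\tensor\one)\cdot,\,\cdot}$ factors into an $L^2(\M)$-part producing $\scal{t\, i_\alpha(x_k),\, i_{\frac12-\alpha}(Jy_l'J)}$ and an $L^2(\alg C)$-part producing a scalar. I then need that scalar to equal exactly $\psi_\Delta(c_k \tensor c_l')$, which is precisely $\scal{c_k c_l'\,\xi_\psi, \xi_\psi}$ by the definition of the diagonal state — matching the left-hand side term by term.

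I expect the main obstacle to be bookkeeping the modular operators: the asymmetric split of the exponent ($\alpha$ on one factor, $\frac12-\alpha$ on the other) is designed so that the $\xi_\psi$-contributions on the $\alg C$-side combine via $\Delta_\psi^\alpha$ and $\Delta_\psi^{\frac12-\alpha}$ into the GNS inner product $\scal{c_k c_l'\,\xi_\psi,\xi_\psi}$ — essentially the KMS/Tomita relation $\scal{\Delta_\psi^{1/2} a\xi_\psi,\, b^*\xi_\psi}$-type manipulation, together with the identification $\alg C' \ni c' \mapsto Jc^*J$ used implicitly in $\psi_\Delta$. Verifying that $v$ respects the modular structure in the required sense (including on the opposite side, where $\Gamma'$ and the modular conjugation interact as in the proof of Proposition~\ref{prop:diag_mod_grp}) is the delicate point; the rest is a careful but routine expansion. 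I would organize the write-up as: (i) reduce to elementary tensors; (ii) record $v\,i_\alpha(x)$ and $v\,i_{\frac12-\alpha}(Jy'J)$ using the modular-commutation hypothesis; (iii) expand both sides and match.
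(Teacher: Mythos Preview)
Your proposal is correct and follows essentially the same route as the paper. The only organizational difference is that the paper packages the $\alg C$-side computation by introducing analogues $j_\alpha:\alg C\to L^2(\alg C)$ and $\Psi_\alpha:\alg C\atensor\alg C'\to\B(L^2(\alg C))_*$ of $i_\alpha$ and $\Phi_\alpha$, then verifies $\Psi_\alpha(c\tensor d')(\one)=\psi_\Delta(c\tensor d')$ and the intertwining $v\,i_\alpha(x)=(i_\alpha\tensor j_\alpha)(\Gamma(x))$ abstractly, rather than expanding $\Gamma(x)$ and $\Gamma'(y')$ into finite sums of elementary tensors; your component-wise bookkeeping amounts to the same calculation.
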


\begin{rmk}
	Notice that the map $\Phi_\alpha$ and the extended dual transition operator $Z'$ depend on the choice of the state $\varphi$ on $\M$, but the diagonal coupling $\hat T_\Delta$ does not.
\end{rmk}

\begin{proof}
	Denote by $\xi_\psi \in L^2(\alg C)$ the GNS-vector and by $\Delta_\psi$ the modular operator associated with~$\psi$.  In analogy to the construction of $i_\alpha$ and $\Phi_\alpha$ we obtain linear injections \mbox{$j_\alpha:\alg C \to L^2(\alg C)$} and $\Psi_\alpha:\alg C \atensor \alg C' \to \B \bigl( L^2(\alg C) \bigr)_*$. For the functional $\Psi_\alpha(c \tensor d')$ with $c \in \alg C$ and $d' \in \alg C'$ we compute
	\begin{align*}
		\Psi_\alpha(c \tensor d')(\one) 
		&= \scal{j_\alpha(c), j_{1/2-\alpha}(Jd'J)} 
		= \scal{\Delta_\psi^\alpha c \xi_\psi, \; \Delta_\psi^{1/2-\alpha} Jd' \xi_\varphi} 
		\\
		&= \scal{c \xi_\varphi, (d')^* \xi_\varphi}
		= \psi_\Delta(c \tensor d').
	\end{align*}
	This equation then extends linearly to the (algebraic) tensor product $\alg C \tensor \alg C'$.
	Since $\Gamma$ commutes with the modular automorphism groups, the associated isometry \mbox{$v:L^2(\M) \to L^2(\M) \tensor L^2(\alg C)$} commutes with all powers of the modular operator on the respective domain. In particular, we have 
	\begin{equation*}
		v i_\alpha(x) 
		= v \Delta_\varphi^\alpha x \xi_\varphi 
		= (\Delta_\varphi^\alpha \tensor \Delta_\psi^\alpha) v x\xi_\varphi 
		= (\Delta_\varphi^\alpha \tensor \Delta_\psi^\alpha) \Gamma(x) \xi_\varphi 
		= (i_\alpha \tensor j_\alpha) \bigl( \Gamma(x) \bigr)
	\end{equation*}
	for every $x  \in \M$. It follows that for every $x \in \M$ and $y' \in \M'$ we obtain
	\begin{align*}
		\MoveEqLeft
		(\Phi_\alpha(x \tensor y') \circ Z')(t)
		= \scal{(t \tensor \one)v i_\alpha(x), \; v i_{1/2-\alpha}(Jy'J)}
		\\
		&= \scal{(t\tensor \one) (i_\alpha \tensor j_\alpha) \bigl( \Gamma(x) \bigr), \; (i_\alpha \tensor j_\alpha) \bigl( J \Gamma'(y') J \bigr)}
		= (\Phi_\alpha \tensor \Psi_\alpha) \bigl( \hat \Gamma(x \tensor y') \bigr)( t \tensor \one)
		\\
		&= \Phi_\alpha \Bigl( (\Id \tensor \psi_\Delta) \bigl( \hat\Gamma(x \tensor y') \bigr) \Bigr)(t)
		= \Phi_\alpha \bigl( \hat T_\Delta(x \tensor y') \bigr),
	\end{align*}
	where $\hat \Gamma:\M \tensor  \M' \to (\M \tensor \M') \tensor (\alg C \tensor \alg C')$ denotes the composition of $\Gamma \tensor \Gamma'$ with the tensor flip of the two inner factors. Linear extension of this equation finally yields the assertion.
\end{proof}

\pagebreak[3]
\begin{thm}
	\label{thm:ac_diag}
	Let $\M = \BH$ and suppose that the hypotheses of Theorem~\ref{thm:diag_vs_Z} are met. Let $p_\Delta \in \M \tensor \M'$ denote the support projection of $\varphi_\Delta$. 
	Then the following statements are equivalent:
	\begin{equivalence}
	\item	\label{en:ac_diag_Gamma}
		$\Gamma$ is asymptotically complete.
	\item	\label{en:ac_diag_proj}
		The (increasing) sequence $\hat T^n_\Delta(p_\Delta)$, $n \in \N$, converges to $\one$ $\sigma$\ndash strongly.
	\item	\label{en:ac_diag_absorb}
		The state $\varphi_\Delta$ is uniformly absorbing for $\hat T_\Delta$, \ie, for every normal state $\hat \varphi$ on~\mbox{$\M \tensor \M'$} we have
		\begin{equation*}
			\lim_{n \to \infty} \norm{\hat \varphi \circ \hat T_\Delta^n - \varphi_\Delta} = 0.
		\end{equation*}
	\item	\label{en:ac_diag_fix}
		The fixed space of $\hat T_\Delta$ is $\C \one$.
	\end{equivalence}
\end{thm}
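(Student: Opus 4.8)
The plan is to establish the cycle of implications \ref{en:ac_diag_Gamma} $\Rightarrow$ \ref{en:ac_diag_proj} $\Rightarrow$ \ref{en:ac_diag_absorb} $\Rightarrow$ \ref{en:ac_diag_fix} $\Rightarrow$ \ref{en:ac_diag_Gamma}, using Theorem~\ref{thm:diag_vs_Z} as the bridge between the diagonal coupling and the extended dual transition operator $Z'$, and Theorem~\ref{thm:asymp_compl} to translate asymptotic completeness into the condition that the fixed space of $Z'$ is $\C\one$. Since $\M=\BH$, Example~\ref{expl:diag_state_BH} identifies $\varphi_\Delta$ with a vector state $\scal{\,\cdot\,\xi_\varphi,\xi_\varphi}$ on $\B(\H\tensor\bar\H)=\B\bigl(L^2(\H)\bigr)$, so $p_\Delta=\supp\varphi_\Delta$ is the one-dimensional projection onto $\C\xi_\varphi$; this concrete picture is what makes the $\BH$ case tractable. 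Throughout I take $\alpha=\tfrac14$ (or any fixed value), so that $i_\alpha$ and $i_{1/2-\alpha}$ coincide and $\Phi_\alpha(x\tensor y')(t)=\scal{t\,i_\alpha(x),i_\alpha(Jy'J)}$ is symmetric; the functional $\Phi_\alpha(p_\Delta$-related elements$)$ should be comparable to evaluation against $\xi_\varphi$.

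For \ref{en:ac_diag_Gamma} $\Rightarrow$ \ref{en:ac_diag_proj}: the sequence $\hat T_\Delta^n(p_\Delta)$ is increasing (as noted in the text, since $\varphi_\Delta\circ\hat T_\Delta=\varphi_\Delta$ forces $\hat T_\Delta(p_\Delta)\ge p_\Delta$), hence $\sigma$-strongly convergent to some projection $q\le\one$ with $\hat T_\Delta(q)\ge q$; I must show $q=\one$. The idea is to apply $\Phi_\alpha$ and invoke the duality: $\Phi_\alpha(\hat T_\Delta^n(z))=\Phi_\alpha(z)\circ (Z')^n$. Since asymptotic completeness means, by Theorem~\ref{thm:asymp_compl}\ref{en:asymp_compl_Z}, that $Z'$ has trivial fixed space, and since $Z'$ is a unital Markov operator with the faithful invariant state $\omega:=\scal{\,\cdot\,\xi_\varphi,\xi_\varphi}$ on $\B(L^2(\H))$ (invariance of $\omega$ under $Z'$ follows from $v\xi_\varphi=\Gamma(\one)\xi_\varphi=\xi_\varphi$ under the modular-commutation hypothesis), the powers $(Z')^n$ converge pointwise $\sigma$-weakly to the conditional expectation onto $\C\one$, i.e. to $\omega(\cdot)\one$ — here one uses the mean ergodic theorem together with triviality of the fixed space to upgrade Cesàro convergence, or more directly a Jacobs–de Leeuw–Glicksberg argument on the (relatively compact) orbit. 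Feeding this into the duality identity and testing against the element corresponding to $p_\Delta$ shows $\Phi_\alpha\bigl((\one-q)\,\cdot\,\text{something}\bigr)\to 0$ in a way that forces $q=\one$; making this last step precise — choosing the right test elements of $\M\atensor\M$ and controlling the $\sigma$-strong limit through the non-normal map $\Phi_\alpha$ — is where care is needed.

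The implication \ref{en:ac_diag_proj} $\Rightarrow$ \ref{en:ac_diag_absorb} is the quantitative heart: given a normal state $\hat\varphi$ on $\M\tensor\M'$, I write $\hat T_\Delta^n(p_\Delta)\to\one$ and apply the Quantum Coupling Inequality, Theorem~\ref{thm:QCI}. First one checks that $\hat\varphi\circ\hat T_\Delta^n$ is a coupling of $\varphi\circ S^n$-type marginals; more precisely, since $\hat T_\Delta$ couples $T$ and $T'$ and $\varphi$ is invariant for $T$ (so its marginals stabilise to $\varphi$ and $\varphi'$), one gets that $\hat\varphi\circ\hat T_\Delta^n$ is asymptotically a coupling of $\varphi$ and $\varphi'$; combined with $(\hat\varphi\circ\hat T_\Delta^n)(p_\Delta)=\hat\varphi(\hat T_\Delta^n(p_\Delta))\to 1$ and the fact that $p_\Delta$ is a diagonal projection (it lies under a maximal one by Proposition~\ref{prop:max_diag_proj}, or one checks the defining identity directly from Example~\ref{expl:diag_state_BH}), Theorem~\ref{thm:QCI} gives $\norm{(\hat\varphi\circ\hat T_\Delta^n)|_{\M\tensor\one} - (\hat\varphi\circ\hat T_\Delta^n)|_{\one\tensor\M'}}\to 0$ and, refining, $\norm{\hat\varphi\circ\hat T_\Delta^n-\varphi_\Delta}\to 0$ using that a coupling with $(\cdot)(p_\Delta)\to1$ converges to the unique state supported on $p_\Delta$. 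Then \ref{en:ac_diag_absorb} $\Rightarrow$ \ref{en:ac_diag_fix} is immediate: a fixed point $\hat T_\Delta(z)=z$ with $z\ge 0$, $\varphi_\Delta(z)=1$ gives a fixed normal state whose iterates are constant, forcing it to equal $\varphi_\Delta$, and a short argument (faithfulness of $\varphi_\Delta$ on $p_\Delta\M\tensor\M' p_\Delta$ plus unitality) collapses the fixed space to $\C\one$. Finally \ref{en:ac_diag_fix} $\Rightarrow$ \ref{en:ac_diag_Gamma} runs the duality backwards: a nontrivial fixed point of $Z'$ would, via $\Phi_\alpha$ and surjectivity/density of its range, produce a nontrivial fixed element of $\hat T_\Delta$, so triviality of the fixed space of $\hat T_\Delta$ forces triviality of that of $Z'$, which is asymptotic completeness by Theorem~\ref{thm:asymp_compl}. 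The main obstacle I anticipate is the careful passage between the normal world of $\hat T_\Delta$ on $\M\tensor\M'$ and the merely-algebraic, non-normal intertwiner $\Phi_\alpha$ — in particular justifying that $\sigma$-strong/$\sigma$-weak limits survive application of $\Phi_\alpha$ and its "inverse", which likely requires restricting to a suitable norm-dense or weak-star-dense subset and exploiting the explicit vector-state form of $\varphi_\Delta$ available because $\M=\BH$.
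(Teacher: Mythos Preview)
Your overall strategy of using the duality of Theorem~\ref{thm:diag_vs_Z} is exactly the paper's approach, but there is a genuine gap in your implication \ref{en:ac_diag_fix}$\Rightarrow$\ref{en:ac_diag_Gamma}. You claim that a nontrivial fixed point $t_0$ of $Z'$ would ``via $\Phi_\alpha$ and surjectivity/density of its range'' produce a nontrivial fixed element of $\hat T_\Delta$. But the duality reads $\Phi_\alpha(\hat T_\Delta z)(t)=\Phi_\alpha(z)(Z' t)$: a fixed point $t_0$ of $Z'$ yields only that the \emph{functional} $z\mapsto\Phi_\alpha(z)(t_0)$ on $\M\atensor\M'$ is $\hat T_\Delta$-invariant. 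This is an invariant of the pre-adjoint action, not a fixed element of $\hat T_\Delta$ itself, and it is not even clear that it extends to a normal functional on $\M\tensor\M'$. Triviality of the fixed space of $\hat T_\Delta$ does not by itself rule this out. The paper avoids this by proving \ref{en:ac_diag_absorb}$\Rightarrow$\ref{en:ac_diag_Gamma} instead: from the convergence in \ref{en:ac_diag_absorb} one obtains $\lim_n(\varphi\tensor\varphi')(\hat T_\Delta^n z)=\varphi_\Delta(z)$ for all $z\in\M\atensor\M'$, which via the duality becomes $\lim_n\rho((Z')^n p_\varphi)=\rho(\one)$ for all $\rho$ in the range of $\Phi_\alpha$; density of this range in $\B(L^2(\M))_*$ then gives $(Z')^n(p_\varphi)\to\one$, and since $p_\varphi$ is one-dimensional this forces the fixed space of $Z'$ to be trivial.

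Two further points where the paper is more efficient than your outline. For \ref{en:ac_diag_Gamma}$\Rightarrow$\ref{en:ac_diag_proj} you invoke a full mean-ergodic or Jacobs--de Leeuw--Glicksberg argument to get $(Z')^n\to\omega(\cdot)\one$ pointwise; the paper needs much less: since the vector state at $\xi_\varphi$ is $Z'$-invariant and $p_\varphi$ is one-dimensional, the sequence $(Z')^n(p_\varphi)$ is increasing, hence converges to a fixed point, hence to $\one$. The key technical device you are missing is the explicit evaluation $\Phi_\alpha(z)(\one)=\varphi_\Delta(z)$ and $\Phi_\alpha(z)(p_\varphi)=(\varphi\tensor\varphi')(z)$, which turns the abstract duality into the concrete statement $\lim_n(\varphi\tensor\varphi')(\hat T_\Delta^n z)=\varphi_\Delta(z)$ for $z\in\M\atensor\M'$; one then extends by norm continuity to the compact operators, in particular to $p_\Delta$, and faithfulness of $\varphi\tensor\varphi'$ finishes. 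For \ref{en:ac_diag_proj}$\Rightarrow$\ref{en:ac_diag_absorb} your detour through the Quantum Coupling Inequality is unnecessary and in fact only controls marginals: the direct argument (which you do mention at the end) is simply that $p_\Delta$ is a one-dimensional projection in $\B(\HS(\H))$, so any normal state $\hat\varphi$ with $\hat\varphi(\hat T_\Delta^n p_\Delta)\to 1$ must satisfy $\hat\varphi\circ\hat T_\Delta^n\to\varphi_\Delta$ in norm.
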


\pagebreak[3]
\begin{proof}
	The implications \ref{en:ac_diag_absorb}$\Rightarrow$\ref{en:ac_diag_fix}$\Rightarrow$\ref{en:ac_diag_proj} are obvious. According to Example~\ref{expl:diag_state_BH} the diagonal projection $p_\Delta$ is one-dimensional in $\BH \tensor \BH' = \B \bigl( \HS(\H) \bigr)$. Hence the implication \ref{en:ac_diag_proj}$\Rightarrow$\ref{en:ac_diag_absorb} is also well-known (see \eg \cite[Lem.\,A.5.3]{Gohm04a}). 

	We now show that \ref{en:ac_diag_Gamma} implies \ref{en:ac_diag_proj}: Suppose that $\Gamma$ is asymptotically complete, \ie, the extended dual transition operator $Z': \B \bigl( L^2(\M) \bigr) \to \B \bigl( L^2(\M) \bigr)$ has trivial fixed space. The vector state of the GNS-vector $\xi_\varphi \in L^2(\M)$ is invariant for $Z'$. For the one-dimensional projection $p_\varphi$ onto $\C\xi_\varphi$ this implies that $(Z')^n(p_\varphi)$ increases and converges to $\one$ $\sigma$-weakly, \ie, we have
	\begin{equation}
		\label{eq:ac_diag}
		\lim_{n \to \infty} \bigl( \rho  \circ (Z')^n \bigr)(p_\varphi)  = \rho(\one)
	\end{equation}
	for every normal functional $\rho$ on $\B \bigl( L^2(\H) \bigr)$. A~straightforward computation on elementary tensors verifies 
	\begin{align*}
		\Phi_\alpha(z)(\one) &= \varphi_\Delta(z), 
		&
		\Phi_\alpha(z)(p_\varphi) &= (\varphi \tensor \varphi')(z)
	\end{align*}
	for every $z \in \BH \atensor \BH'$. Using the duality of Theorem~\ref{thm:diag_vs_Z}, Equation~\eqref{eq:ac_diag} then implies 
	\begin{equation}
		\label{eq:ac_diag_alg}
		\lim_{n \to \infty} (\varphi \tensor \varphi') \bigl( \hat T_\Delta^n(z) \bigr) = \varphi_\Delta(z)
	\end{equation}
	for every $z \in \BH \atensor \BH'$. Since the Markov operator $\hat T_\Delta$ is norm contractive, this equation extends to all elements $z \in \BH \tensor \BH' = \BH \tensor \B(\bar\H)$ contained in the norm closure of $\BH \atensor \B(\bar\H)$. This closure covers all compact operators on $\H \tensor \bar\H$, in particular the one-dimensional projection $z = p_\Delta$. We therefore have
	\begin{equation*}
		\lim_{n \to \infty} (\varphi \tensor \varphi') \bigl( \hat T_\Delta^n(p_\Delta) \bigr) = 1.
	\end{equation*}
	Since $\varphi \tensor \varphi'$ is a faithful state, it follows that the increasing sequence $\hat T_\Delta^n(p_\Delta)$, $n \in \N$, converges $\sigma$-strongly to $\one$.

	For the converse we essentially reverse the arguments: Suppose condition~\ref{en:ac_diag_absorb} holds. Then we particularly have Equation~\eqref{eq:ac_diag_alg} for every $z \in \BH \atensor \BH'$. It follows that Equation~\eqref{eq:ac_diag} holds for every functional $\rho$ of the form $\rho = \Phi_\alpha(z)$ with $z \in \BH \tensor \BH'$. Since these functionals are norm dense in the predual of $\B \bigl( \HS(\H) \bigr)$, Equation~\eqref{eq:ac_diag} extends to all functionals on $\B \bigl( \HS(\H) \bigr)$. Since $p_\varphi$ is a one-dimensional projection, we finally conclude that $Z'$ has trivial fixed points, \ie, $\Gamma$ is asymptotically complete.
\end{proof}

\section{Applications}
\label{sec:applications}

In this final section we present a classical and a non-commutative example serving different purposes. The classical example demonstrates how the coupling method can be used to bound the speed of convergence by means of coding theory and combinatorics. The example itself is design to be easily accessible by examining suitable pictures, but the method itself immediately generalizes to arbitrary classical Markov processes an finite sets.

For non-commutative examples we do not have the combinatorics available. To apply the coupling method we therefore take a different approach. Although we do not arrive at good bounds for the speed of convergence, for certain examples our method has the advantage of being computationally simple.

\subsection{A Commutative Example}
\label{sec:classical_expl}

\begin{figure}
	\centering
	\begin{tikzpicture}[->, xscale=1.5]
		\useasboundingbox (-1.5,-1) rectangle (3.5,1.5);

		% NOTE: nodes R, S, and T are reversed
		\node[circle, draw] (R) at (0,0) {$s_1$};
		\node[circle, draw] (S) at (1,0) {$s_2$};
		\node[circle, draw] (T) at (2,0) {$s_3$};

		\path 
		(R) 	edge[out=160, in=200, loop, color=red] node[left]{$r$} (R)
			edge[out=130, in=230, loop, color=green] node[left]{$g$} (R)
			edge[bend right, color=blue] node[below]{$b$} (S)
		(S) 	edge[bend right, color=red] node[above]{$r$} (R)
			edge[bend right, color=blue] node[below]{$b$} (T)
			edge[out=70, in=110, loop, color=green] node[above]{$g$} (S)
		(T) 	edge[bend right, color=red] node[above]{$r$} (S) 
			edge[out=340, in=20, loop, color=blue] node[right]{$b$} (T)
			edge[out=310, in=50, loop, color=green] node[right]{$g$} (T);
	\end{tikzpicture}
	\caption{Chosen Road Coloring}
	\label{fig:road_3}
\end{figure}
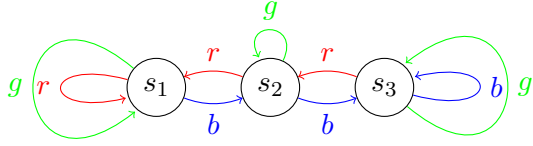
Consider the set $S := \{s_1, s_2, s_3\}$ and the transition matrix
\begin{align*}
	T &:= \begin{pmatrix}
		\nicefrac56 & \nicefrac16 & 0 \\
		\nicefrac13 & \nicefrac12 & \nicefrac16 \\
		0 & \nicefrac13 & \nicefrac23
	\end{pmatrix}
	&
	&\text{with transition graph}
	&
	&\begin{tikzpicture}[->, xscale=1.5, baseline]
		\node[circle, draw] (T) at (0,0) {$s_1$};
		\node[circle, draw] (S) at (1,0) {$s_2$};
		\node[circle, draw] (R) at (2,0) {$s_3$};
		\path 
		(T)	edge[out=160, in=200, loop] node[above]{\nicefrac56} (T)
			edge[bend right] node[below]{\nicefrac16} (S) 
		(S) 	edge[bend right] node[below]{\nicefrac16} (R)
			edge[bend right] node[above]{\nicefrac13} (T)
			edge[out=70, in=110, loop] node[above]{\nicefrac12} (S)
		(R) 	edge[out=340, in=20, loop] node[above]{\nicefrac23} (R)
			edge[bend right] node[above]{\nicefrac13} (S);
	\end{tikzpicture} \;.
\end{align*}
A dilations of $T$ is given by a map $\gamma:S \times C \to S$ with a set $C$ together with a suitable probability measure on the set $C$. Here we choose the set $C := \{r,g,b\}$ (red, green, blue) and the map $\gamma$ pictured in Figure~\vref{fig:road_3}. As a probability measure on $C$ we fix
\begin{align*}
	\nu(r) &:= \nicefrac13,
	&
	\nu(g) &:= \nicefrac12,
	&
	\nu(b) &:= \nicefrac16.
\end{align*}
The diagonal coupling $\hat T_\Delta$ then has a dilation given by the so called \emph{graph product} map
\begin{equation*}
	\hat\gamma: (S \times S) \times C \longrightarrow S \times S, 
	\quad
	\hat\gamma \bigl( (s,s'), \; c \bigr) := \bigl( \gamma(s,c), \; \gamma(s',c) \bigr),
\end{equation*}
where $C$ is again equipped with the measure $\nu$. This map $\hat\gamma$ is shown in Figure~\vref{fig:graph-product}.
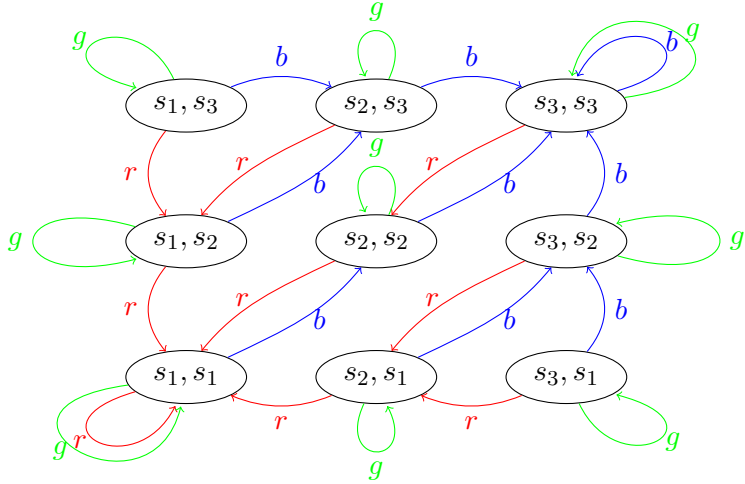
\begin{figure}
	\centering
	\begin{tikzpicture}[->, xscale=2.5, yscale=1.8]
		\useasboundingbox (0,0) rectangle (4,4);

		% vertices
		\foreach \x in {1,2,3} 
			\foreach \y in {1,2,3} 
			{
				\node[ellipse, draw] (s\x\y) at (\x,\y) {$s_\x, s_\y$};
			}

		% blue edges 
		% diagonal edges
		\foreach \xa/\ya in {1/2, 2/3} \foreach \xb/\yb in {1/2, 2/3}
		{
			%\path (s\xa\xb) edge[bend right, color=blue] node[auto]{$b$} (s\ya\yb);
			\path (s\xa\xb) edge[out=30, in=240, color=blue] node[right]{$b$} (s\ya\yb);
		}
		% horizontal and vertical edges
		\foreach \xb/\yb in {1/2, 2/3} 
		{
			\path (s3\xb) edge[bend right, color=blue] node[right]{$b$} (s3\yb);
			\path (s\xb3) edge[bend left, color=blue] node[auto]{$b$} (s\yb3);
		}	
		% loop
		\path (s33) edge[out=20, in=70, loop, color=blue] node[right]{$b$} (s33);

		% red edges 
		% diagonal edges
		\foreach \xa/\ya in {3/2, 2/1} \foreach \xb/\yb in {3/2, 2/1}
		{
			%\path (s\xa\xb) edge[bend right, color=red] node[left]{$r$} (s\ya\yb);
			\path (s\xa\xb) edge[out=210, in=60, color=red] node[left]{$r$} (s\ya\yb);
		}
		% horizontal and vertical edges
		\foreach \xb/\yb in {3/2, 2/1}
		{
			\path (s1\xb) edge[bend right, color=red] node[left]{$r$} (s1\yb);
			\path (s\xb1) edge[bend left, color=red] node[auto]{$r$} (s\yb1);
		}	
		% loop
		\path (s11) edge[out=200, in=250, loop, color=red] node[left]{$r$} (s11);

		% green edges 
		\path (s11) edge[out=190, in=260, loop, color=green] node[left]{$g$} (s11);
		\path (s12) edge[out=160, in=200, loop, color=green] node[left]{$g$} (s12);
		\path (s13) edge[out=110, in=160, loop, color=green] node[left]{$g$} (s33);
		\path (s21) edge[out=250, in=290, loop, color=green] node[below]{$g$} (s21);
		\path (s22) edge[out=70, in=110, loop, color=green] node[above]{$g$} (s22);
		\path (s23) edge[out=70, in=110, loop, color=green] node[above]{$g$} (s23);
		\path (s31) edge[out=290, in=340, loop, color=green] node[right]{$g$} (s31);
		\path (s32) edge[out=340, in=20, loop, color=green] node[right]{$g$} (s32);
		\path (s33) edge[out=10, in=80, loop, color=green] node[right]{$g$} (s33);
	\end{tikzpicture}
	\caption{Graph Product of $\gamma$}
	\label{fig:graph-product}
\end{figure}

Now, we call a finite sequence $(c_1, \dots, c_n)$ in $C$ a \emph{synchronizing sequence} if all paths in the graph of Figure~\ref{fig:road_3} labeled precisely by $(c_1, \dots, c_n)$ share the same destination (\cf~\cite{Gohm-etal}). In the graph product, a path labeled by a synchronizing word obviously has its destination on the diagonal $\Delta := \{(s,s) \;|\; s \in S \}$. We therefore have the estimate
\begin{equation*}
	(\hat\mu \circ \hat T_\Delta^n)(\Delta) \ge \nu^{\tensor n} \bigl( \{(c_1, \dots, c_n) \;|\; (c_1,\dots, c_n) \;\text{synchronizing} \bigr)
\end{equation*}
for an arbitrary probability measure $\hat\mu$ on~$S\times S$.

Inspecting Figure~\ref{fig:graph-product} it is immediate that the synchronizing sequences are precisely those that contain $(r,g,\dots, g,r)$ or $(b,g,\dots, g,b)$ as a subsequence. Equivalently, the \textbf{non}\ndash synchronizing sequences are precisely those that after removing all $g$'s are alternating in $r$ and $b$. For each of this sequences its probability with respect to $\nu^{\tensor n}$ only depends on the number of occurrences of $r$, $b$, and $g$. Since there are $\binom{n}{k}$ different sequences of length $n$ that contain $n-k$ times the element $g$, we arrive at the estimate
\begin{align*}
	(\hat \mu \circ \hat T^n)(\Delta) 
	&\ge 1 - \sum_{k=0}^n \binom{n}{k} (\tfrac12)^{n-k} \cdot \begin{cases}
		1 &\text{, if $k=0$}
		\\
		2 \cdot (\tfrac13 \cdot \tfrac16)^{k/2} &\text{, if $0 \neq k$ is even} 
		\\
		(\tfrac16 + \tfrac13) \cdot (\tfrac13 \cdot \tfrac16)^{(k-1)/2} &\text{, if $k$ is odd}
	\end{cases}
	\\
	&\ge 1 - 2 \cdot (\tfrac12 + \tfrac16\sqrt2)^n
\end{align*}
for every $n \ge 2$ (trivially also for $n=0,1$) and for an arbitrary probability measure $\hat\mu$ on $S \times S$. The classical coupling inequality then provides the upper bound
\begin{equation*}
	\norm{\mu_1 \circ T^n - \mu_2 \circ T^n} \le 4 \cdot (\tfrac12 + \tfrac16 \sqrt2)^n.
\end{equation*}

\begin{rmk}
	A simple computation verifies that $\tfrac12 + \tfrac16 \sqrt2$ is the second largest eigenvalue of the transition matrix $T$. Hence, the asymptotic rate of $(\tfrac12 + \tfrac16\sqrt2)^n$ in the coupling inequality is tight.
\end{rmk}

\pagebreak[3]
\subsection{A Non-Commutative Example}

We now turn to a non-commutative example. The example is motivated by the micromaser experiment in quantum optics, where a quantum harmonic oscillator interacts with a two level atom via the Jaynes-Cummings interactions (see \eg~\cite{Jaynes-Cummings, Meystre-Sargent}. Under some trapping state condition this model has been generalized in \cite{Gohm-etal} and shown to be asymptotically complete. This guarantees that our method can successfully be applied. 

For sake of simplicity we choose model parameters that do not fit the Jaynes-Cummings interaction. Instead we take parameters such that the classical example discussed in Section~\ref{sec:classical_expl} is embedded. A further advantage of our choice is that the computations can still be made by hand. The presented method itself can easily be adopted to any finite-dimensional asymptotically complete system.

We start by introducing some abbreviations. Let $M_3$ denote the algebra of complex $(3\times 3)$-matrices and put 
\begin{align*}
	s &:= \begin{pmatrix}
		0 & 0 & 0 \\ 1 & 0 & 0 \\ 0 & 1 & 0 
	\end{pmatrix},
	&
	a &:= \begin{pmatrix} 
		1 & 0 & 0 \\ 0 & \tfrac12 \sqrt2 & 0 \\ 0 & 0 & \tfrac12 \sqrt2 
	\end{pmatrix},
	&
	a_+ := \begin{pmatrix}
		\tfrac12 \sqrt2 & 0 & 0 \\ 0 & \tfrac12 \sqrt2 & 0 \\ 0 & 0 & 1
	\end{pmatrix}.
\end{align*}
We define a unitary matrix $u \in M_3 \tensor M_2$ as a $2\times2$ block matrix~by
\begin{equation*}
	u := \begin{pmatrix}
		a_+ & i\tfrac12 \sqrt2 \;s^* \\
		i \tfrac12 \sqrt2 \;s & a
	\end{pmatrix}.
\end{equation*}
On the algebra $M_2$ of $(2\times2)$-matrices we fix the state $\psi(y) := \tfrac13 y_{1,1} + \tfrac23 y_{2,2}$ ($y \in M_2$). In this example we study the Markov operator 
\begin{equation*}
	T:M_3 \to M_3,
	\quad
	T(x) := (\Id \tensor \psi) \bigl( u^* (x\tensor \one) u \bigr).
\end{equation*}
%\begin{gather*}
%	T(x) := t_1^* x t_1 + t_2^*x t_2 + t_3^*xt_3 + t_4^* x t_4
%	\\
%	\begin{aligned}
%		t_1 &:= \tfrac13 \sqrt3 \; a_+ + i \tfrac13 \sqrt3 \; s^*,
%		&
%		t_2 &:= \tfrac13 \sqrt6 \; a + i \tfrac16 \sqrt6 \; s,
%		\\
%		t_3 &:= \tfrac13 \sqrt3 \; s^*,
%		&
%		t_4 &:= \tfrac13 \sqrt 3 \; a.
%	\end{aligned}
%\end{gather*}

\begin{rmk}
	The operator $T$ maps diagonal matrices into diagonal matrices. 
	If we identify diagonal matrices with $\ell^\infty(S)$ and restrict $T$, we recover the preceding example of Section~\ref{sec:classical_expl}.	
\end{rmk}

The operator $T$ already comes along with the dilation given by
\begin{equation*}
	\Gamma:M_3 \to M_3 \tensor M_2, 
	\quad
	\Gamma(x) := u^*(x\tensor \one)x.
\end{equation*}
Moreover, the state on $M_3$ given by $\varphi(x) := \tfrac47 x_{1,1} + \tfrac27 x_{2,2} + \tfrac17 x_{3,3}$ ($x \in M_3$) satisfies \mbox{$(\varphi \tensor \psi) \circ \Gamma = \varphi$} and $\Gamma$ commutes with the modular automorphism groups of $\varphi$ and $\psi$. The hypothesis of Theorem \ref{thm:diag_vs_Z} and \ref{thm:ac_diag} are hence satisfied. 
To study the diagonal coupling associated with $\Gamma$ and $\psi$ we identify the commutants $M_3'$  and $M_2'$ again with $M_3$ and $M_2$, respectively, via the transpose map. Then a Kraus decomposition of the diagonal coupling $\hat T_\Delta$ is given by 
\begin{gather*}
	\hat T_\Delta: M_3 \tensor M_3 \longrightarrow M_3 \tensor M_3
	\\
	\hat T_\Delta(z) = t_1^* z t_1 + t_2^* z t_2 + t_3^* z t_3 + t_4^* z t_4
	\shortintertext{with}
	\begin{aligned}
		t_1 &:= \tfrac13 \sqrt3 \; (a_+ \tensor a_+) + \tfrac16 \sqrt 6 \; (s^* \tensor s^*),
		&
		t_2 &:= \tfrac16 \sqrt6 \; (s \tensor a_+) - \tfrac13 \sqrt 3 \; (a \tensor s^*),
		\\
		t_4 &:= \tfrac16 \sqrt3 \; (s \tensor s) + \tfrac13 \sqrt6 \; (a \tensor a),
		&
		t_3 &:= \tfrac16 \sqrt6 \; (a_+ \tensor s) - \tfrac13 \sqrt 3 \; (s^* \tensor a).
	\end{aligned}
\end{gather*}

\begin{rmk}
	The products $t_i^* t_j$ are linear independent. Hence $\hat T_\Delta$ is an extremal Markov operator; in particular, it is not of the type mentioned in Example~\vref{ex:M_c_convex}.
\end{rmk}

For simplifying the calculations you may observe that each operator $t_i$, $1 \le i \le 4$, respects the decomposition of $\C^3 \tensor \C^3$ into the subspaces
\begin{equation}	\label{eq:subspaces}
	\begin{gathered}
		\H_0 := \lin \{e_1 \tensor e_1, \; e_2 \tensor e_2, \; e_3 \tensor e_3\},
		\\
		\H_{+1} := \lin \{ e_1 \tensor e_2, \; e_2 \tensor e_3 \},
		\qquad
		\H_{-1} := \lin \{ e_2 \tensor e_1, \; e_3 \tensor e_2 \},
		\\
		\H_{+2} := \C e_1 \tensor e_3, 
		\qquad
		\H_{-2} := \C e_3 \tensor e_1,
	\end{gathered}
\end{equation}
where $e_1, e_2, e_3$ denotes the canonical basis of $\C^3$. More precisely, if we denote by $p_k$ the projection onto $\H_k$ for $k \in \{0,\pm1, \pm2\}$ and put $p_k := 0$ otherwise, we have
\begin{align*}
	t_1 p_k &= p_k t_1,
	&
	t_2 p_k &= p_{k-1} t_2,
	&
	t_3 p_k &= p_{k+1} t_3,
	&
	t_4 p_k = p_k t_4
\end{align*}
for every $k \in \Z$. For the support projection $p_\Delta := \supp \varphi_\Delta \le p_0$ a straightforward calculation then yields $p_k \, \hat T_\Delta^2(p_\Delta) \, p_\ell = 0$ for $\ell \neq k$ and
\begin{align*}
	p_2 \, \hat T_\Delta^2(p_\Delta) \, p_2 
	&= t_2^* t_2^* p_\Delta t_2 t_2 
	= \frac{11}{84} - \frac{5}{63} \, \sqrt 2 
	\approx 0.0187 \;,
	\\
	p_1 \, \hat T_\Delta^2(p_\Delta)\,  p_1 
	&= \sum_{i\in \{1,4\}} (t_2^* t_i^* p_\Delta t_i t_2 + t_i^* t_2^* p_\Delta t_2 t_i)
	\\
	&= \frac{1}{1008} \begin{pmatrix}
		379 - 152 \sqrt2 & 54 - 9 \sqrt 2 \\
		54 - 9 \sqrt2 & 190 - 116 \sqrt 2
	\end{pmatrix},
	\\
	p_0 \, \hat T_\Delta^2(p_\Delta)\,  p_0
	&= \sum_{i,j \in \{1,4\}} t_j^* t_i^* p_\Delta t_i t_j + t_3^* t_2^* p_\Delta t_2 t_3 + t_2^* t_3^* p_\Delta t_3 t_2
	\\
	&= \frac{1}{1008} \begin{pmatrix}
		717 - 16 \sqrt2 & 52 + 189 \sqrt2 & 204 - 20 \sqrt2 \\
		52 + 189 \sqrt2 & 483 - 80 \sqrt2 & 56 - 147 \sqrt2 \\
		204 - 20 \sqrt2 & 56 + 147 \sqrt2 & 306 - 16 \sqrt2
	\end{pmatrix},
\end{align*} 
where the matrices on the rightmost side are given with respect to the tensor product basis mentioned in~\eqref{eq:subspaces}. Due to the symmetry under tensor flip the matrices of $p_{-1} \, \hat T_\Delta^2(p_\Delta) \, p_{-1}$ and $p_1 \, \hat T_\Delta^2(p_\Delta) \, p_1$ agree, and likewise for $p_{-2}\, \hat T_\Delta^2(p_\Delta) \, p_{-2}$ and $p_2 \, \hat T_\Delta^2(p_\Delta) \, p_2$. Computing the eigenvalues of these matrices%
\footnote{
	The vector $\xi_\Delta \in \H_0$ corresponding to the pure state $\varphi_\Delta$ always is an eigenvector of $T_\Delta^n(p_\Delta)$. The computations therefore reduce to $(2\times 2)$-matrices.%
}
we find that the smallest eigenvalue occurs for $p_{\pm1} T_\Delta^2(p_\Delta) p_{\pm 1}$ and is given~by 
\begin{equation*}
	r := \frac{1}{2016} \bigl( 569 - 268 \sqrt2 - 9 \sqrt{625 - 216 \sqrt2} \bigr) \approx 0.014.
\end{equation*}
Consequently, we have $T_\Delta^2(p_\Delta) \ge r \one$. Since we also have $T_\Delta^2(p_\Delta) \ge p_\Delta$, it follows $T_\Delta^2(p_\Delta) \ge r \one + (1-r) p_\Delta$. By induction we deduce that
\begin{equation*}
	T_\Delta^{2n}(p_\Delta) \ge \bigl( 1 - (1-r)^n \bigr) \one + (1-r)^n p_\Delta \ge \one - (1-r)^n \one
\end{equation*}
for every $n \in \N$. Now the Quantum Coupling Inequality (Theorem~\ref{thm:QCI}) finally yields
\begin{equation*}
	\norm{\varphi_1 \circ T^n - \varphi_2 \circ T^n} \le 4 (1-r)^{n/4}
\end{equation*}
for every even $n \in \N$.

\begin{rmk}
	The presented method relies on the fact that it follows from asymptotic completeness that $\hat T^n(p_\Delta)$ is strictly positive for some $n \in \N$. In this particular example $n=2$ is the smallest number for which this occurs. Our numeric calculations show that choosing $n > 2$ improves the estimate monotonely. But the limit is not close to the rate given by the second largest eigenvalue of $T$, which is $\tfrac{1}{12} + \tfrac13 \sqrt2 + \tfrac{1}{12} \sqrt5 \approx 0.741$. 
\end{rmk}

\bibliographystyle{alpha}
\bibliography{diagonal_couplings}

\end{document}